\documentclass[12pt,leqno]{article}
\tolerance=2000
\usepackage{amssymb}
\usepackage[mathscr]{eucal}
\usepackage{amsmath,amssymb,latexsym,theorem,enumerate,ifthen}
\usepackage[english]{babel}
\usepackage{color,url}
\usepackage{appendix}
\usepackage{graphicx}
\usepackage{subcaption}

\setlength{\oddsidemargin}{-5truemm}
\setlength{\evensidemargin}{-5truemm}
\setlength{\topmargin}{-1.6truecm}
\setlength{\textheight}{23.5cm}
\textwidth17cm
\parskip 2mm
\setlength{\abovedisplayskip}{10pt plus 2.5pt minus 7.5pt}
\setlength{\belowdisplayskip}{10pt plus 2.5pt minus 7.5pt}

\jot3mm

\newcommand{\NN}{\mathbb{N}}

\newcommand{\RR}{\mathbb{R}}

\newcommand{\ZZ}{\mathbb{Z}}

\newcommand{\bx}{{\boldsymbol{x}}}

\newcommand{\by}{{\boldsymbol{y}}}

\newcommand{\cX}{{\mathcal X}}

\newcommand{\core}{\operatorname{cor}}

\newcommand{\comment}[1]{}

\renewcommand{\leq}{\leqslant}
\renewcommand{\geq}{\geqslant}

\newcommand{\proofend}{\hfill\mbox{$\Box$}}

\numberwithin{equation}{section}

\theoremstyle{change} \theorembodyfont{\em}
\newtheorem{Lem}{Lemma.}[section]
\newtheorem{Thm}[Lem]{Theorem.}

\theorembodyfont{\rm}
\newtheorem{Def}[Lem]{Definition.}
\newtheorem{Rem}[Lem]{Remark.}

\newtheorem{Ex}[Lem]{Example.}

\def\OnlyOnArXiv#1#2{\ifthenelse{\equal{#1}{Y}}{#2}{}}

\long\def\Eq#1#2{\ifthenelse{\equal{#1}{*}}
  {\begin{equation*}\begin{aligned}#2\end{aligned}\end{equation*}}
  {\begin{equation}\begin{aligned}\label{#1}#2\end{aligned}\end{equation}}}

\newenvironment{proof}{\noindent{\bf Proof.}}{\proofend}

\begin{document}

\begin{center}
 {\bfseries\Large Axiomatic characterisation of generalized $\psi$-estimators}

\vspace*{3mm}

{\sc\large
  M\'aty\'as $\text{Barczy}^{*,\diamond}$,
  Zsolt $\text{P\'ales}^{**}$ }

\end{center}

\vskip0.2cm

\noindent
 * HUN-REN–SZTE Analysis and Applications Research Group,
   Bolyai Institute, University of Szeged,
   Aradi v\'ertan\'uk tere 1, H--6720 Szeged, Hungary.

\noindent
 ** Institute of Mathematics, University of Debrecen,
    Pf.~400, H--4002 Debrecen, Hungary.

\noindent E-mails: barczy@math.u-szeged.hu (M. Barczy),
                  pales@science.unideb.hu  (Zs. P\'ales).

\noindent $\diamond$ Corresponding author.

\vskip0.2cm


{\renewcommand{\thefootnote}{}
\footnote{\textit{2020 Mathematics Subject Classifications\/}:
 {62A01, 62F10, 26E60} }
\footnote{\textit{Key words and phrases\/}:
generalized $\psi$-estimator, $Z$-estimator, characterisation, symmetry, internality, asymptotic idempotency.}
\vspace*{0.2cm}
}

\vspace*{-10mm}

\begin{abstract}
We give axiomatic characterisations of generalized $\psi$-estimators
 and (usual) $\psi$-estimators (also called $Z$-estimators), respectively.
The key properties of estimators that come into play in the characterisation theorems are the symmetry, the (strong) internality and the asymptotic idempotency.
In the proofs, a separation theorem for Abelian subsemigroups plays a crucial role.
\end{abstract}


\section{Introduction}
\label{section_intro}

In statistics, $M$-estimators play a fundamental role, and a special subclass, the class of $\psi$-estimators (also called $Z$-estimators) is in the heart of investigations as well.
In this paper, we address a foundational topic for a generalized version of $\psi$-estimators (see Definition \ref{Def_Tn}), namely, their axiomatic characterisation.
Results of this type are important in all branches of mathematics.
It will turn out that the key properties of estimators that come into play in the characterisation theorems are the symmetry, (strong) internality and asymptotic idempotency.
In the proofs, surprisingly, a separation theorem for Abelian subsemigroups (due to P\'ales {\cite[Theorem 1]{Pal89a}},
see also Theorem \ref{Thm_szeparacios}) plays a crucial role.

Let $(X,\cX)$ be a measurable space, $\Theta$ be a Borel subset of $\RR$, and $\psi:X\times\Theta\to\RR$ be a function such that for all $t\in\Theta$, the function $X\ni x\mapsto \psi(x,t)$ is measurable with respect to the sigma-algebra $\cX$.
Let $(\xi_n)_{n\geq 1}$ be a sequence of independent and identically distributed random variables with values in $X$ such that the distribution of $\xi_1$ depends on an unknown parameter $\vartheta \in\Theta$.
For each $n\geq 1$, Huber \cite{Hub64, Hub67} introduced, among others, an important estimator of $\vartheta$ based on the observations
 $\xi_1,\ldots,\xi_n$ as a solution $ \widehat\vartheta_{n,\psi}(\xi_1,\ldots,\xi_n)$ of the equation:
 \[
    \sum_{i=1}^n \psi(\xi_i,t)=0, \qquad t\in\Theta.
 \]
In the statistical literature, one calls $\widehat\vartheta_{n,\psi}(\xi_1,\ldots,\xi_n)$ a $\psi$-estimator of the unknown
 parameter $\vartheta\in\Theta$ based on the i.i.d.\ observations $\xi_1,\ldots,\xi_n$,
 while other authors call it a $Z$-estimator (the letter Z refers to ''zero'').
In fact, $\psi$-estimators are special $M$-estimators (where the letter $M$ refers to ''maximum likelihood-type'') that were also introduced by Huber \cite{Hub64, Hub67}.
For a detailed exposition of M-estimators and $\psi$-estimators, see, e.g., Kosorok \cite[Sections 2.2.5 and 13]{Kos} or van der Vaart \cite[Section 5]{Vaa}.

In our recent paper Barczy and P\'ales \cite{BarPal2}, we introduced the notion of weighted generalized $\psi$-estimators
(recalled below in Definition \ref{Def_Tn}), and we studied their existence and uniqueness.
Among others, given a function $\psi:X\times \Theta \to\RR$, we derived necessary as well as sufficient conditions under which there exists a unique generalized $\psi$-estimator based on any possible realization $(x_1,\ldots,x_n)\in X^n$, $n\geq 1$.
However, the following question remained open: given an arbitrary estimator for the unknown parameter $\vartheta\in\Theta$, can one find a function $\psi:X\times \Theta\to\RR$ such that the given estimator coincides with a generalized $\psi$-estimator based on any possible realization $(x_1,\ldots,x_n)\in X^n$, $n\geq 1$?
A similar question can be formulated for (usual) $\psi$-estimators ($Z$-estimators) as well.
This paper is devoted to answer these two questions, namely, to derive axiomatic characterisations of generalized $\psi$-estimators and (usual) $Z$-estimators, respectively.
We call the attention to the fact that, in our setup, it does not matter whether the realization $(x_1,\ldots,x_n)$ comes from i.i.d.\ random variables $\xi_1,\ldots,\xi_n$ or not.
In the investigation of the asymptotic properties of the generalized $\psi$ estimators based on $(\xi_1,\ldots,\xi_n)$ as $n\to\infty$, the property i.i.d.\ for the sequence $(\xi_k)_{k\geq 1}$ could play a certain role.

It will turn out that our results are somewhat similar to the well-known characterisation theorem of quasi-arithmetic means (that are generalizations of the sample mean), proved independently of each other by Kolmogorov \cite{Kol30}, Nagumo \cite{Nag30, Nag31}, and de Finetti \cite{Def31} (see also Tikhomirov \cite[page 144]{Tik}).
For completeness, we recall this result (in the spirit of Kolmogorov \cite{Kol30}) together with the notion of quasi-arithmetic means.
Our axiomatic characterisation of generalized $\psi$-estimators and (usual) $\psi$-estimators in Theorems \ref{Thm_charac_psi_est} and \ref{Thm_charac_cont_psi_est}
 are in fact natural counterparts of the characterisation of (strongly) internal means
 due to P\'ales \cite[Theorem 9]{Pal89a}, which served us as a motivation.

Throughout this paper, let $\NN$, $\ZZ_+$, $\RR$ and $\RR_+$ denote the sets of positive integers, non-negative integers, real numbers and  non-negative real numbers, respectively.
An interval $\Theta\subseteq\RR$ will be called nondegenerate if it contains at least two distinct points.
Given a nonempty set $S$ and a function $f:S\to\RR$, let $\inf f(S):=\inf\{f(s) : s\in S\}$ and $\sup f(S):=\sup\{f(s) : s\in S\}$.

A classical and well-studied class of means is the class of quasi-arithmetic means (see, e.g., the monograph of Hardy et al.\ \cite{HarLitPol34}).

\begin{Def}[Quasi-arithmetic mean]\label{Def_quasi_arithmetic}
Let $n\in\NN$, let $I$ be a nondegenerate interval  of $\RR$, and let $f:I\to\RR$ be a continuous and strictly increasing function.
The $n$-variable quasi-arithmetic mean $\mathscr{A}^f_n:I^n\to I$ is defined by
 \[
  \mathscr{A}^f_n(x_1,\ldots,x_n):= f^{-1} \bigg( \frac{1}{n} \sum_{i=1}^n f(x_i)\bigg), \qquad x_1,\ldots, x_n\in I,
 \]
where $f^{-1}$ denotes the inverse of $f$. The function $f$ is called the generator of $\mathscr{A}^f_n$.
\end{Def}

\begin{Thm}[Kolmogorov (1930), Nagumo (1930) and de Finetti (1931)]\label{Thm_Kolmogorov}
Let $I$ be a compact nondegenerate interval of $\RR$ and let $M_n:I^n\to\RR$, $n\in\NN$, be a sequence of functions. Then the following two statements are equivalent:
\vspace{-8pt}
\begin{enumerate}[(i)]\itemsep=-4pt
\item The sequence $(M_n)_{n\in\NN}$ is quasi-arithmetic, that is, there exists a continuous and strictly increasing function $f:I\to\RR$ such that
 \[
   M_n(x_1,\ldots,x_n) = \mathscr{A}^f_n(x_1,\ldots,x_n),
    \qquad x_1,\ldots,x_n\in I,\;\; n\in\NN.
 \]
\item The sequence $(M_n)_{n\in\NN}$ possesses the following properties:
\begin{itemize}
  \item  $M_n$ is continuous and strictly increasing in each variable for each $n\in\NN$,
  \item $M_n$ is symmetric for each $n\in\NN$  (i.e., $M_n(x_1,\ldots,x_n) = M_n(x_{\pi(1)},\ldots,x_{\pi(n)})$ for each
        $x_1,\ldots,x_n \in I$ and each permutation $(\pi(1),\ldots,\pi(n))$ of $(1,\ldots,n)$),
  \item $M_n(x_1,\ldots,x_n) = x$ whenever $x_1=\cdots =x_n = x\in I$, $n\in\NN$,
  \item $M_{n+m}(x_1,\ldots,x_n, y_1,\ldots,y_m) = M_{n+m}(\overline x_n, \ldots, \overline x_n, y_1,\ldots, y_m)$
        for each $n,m\in\NN$, $x_1,\ldots,x_n,y_1,\ldots,y_m\in I$, where $\overline x_n:=M_n(x_1,\ldots,x_n)$.
 \end{itemize}
\end{enumerate}
\end{Thm}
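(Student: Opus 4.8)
The plan is to prove the two implications separately. The implication $(i)\Rightarrow(ii)$ is a routine verification: if $M_n=\mathscr{A}^f_n$ with $f\colon I\to\RR$ continuous and strictly monotone, then continuity and strict increasingness in each variable are inherited from $f$ and $f^{-1}$; symmetry is clear since $\sum_{i=1}^n f(x_i)$ is symmetric; $M_n(x,\ldots,x)=f^{-1}(f(x))=x$; and the reduction property holds because $\overline x_n=\mathscr{A}^f_n(x_1,\ldots,x_n)$ forces $\sum_{i=1}^n f(x_i)=n\,f(\overline x_n)$, so replacing the block $(x_1,\ldots,x_n)$ by $n$ copies of $\overline x_n$ does not change $\frac1{n+m}\bigl(\sum_{i=1}^n f(x_i)+\sum_{j=1}^m f(y_j)\bigr)$. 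All the substance is in $(ii)\Rightarrow(i)$, which I would carry out in five steps.

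\emph{Steps 1--3 (a generator on the rationals).} Iterating the reduction property and then applying reflexivity yields \emph{repetition invariance}: $M_{kn}(x_1,\ldots,x_n,\ldots,x_1,\ldots,x_n)=M_n(x_1,\ldots,x_n)$ for all $k,n\in\NN$, where the block $(x_1,\ldots,x_n)$ is repeated $k$ times. Together with symmetry this shows that for $x,y\in I$ and $p,q\in\NN$ with $p\le q$ the number $M_q(\underbrace{x,\ldots,x}_{p},\underbrace{y,\ldots,y}_{q-p})$ depends on $p,q$ only through $\lambda:=p/q$; denote it $\mu_\lambda(x,y)$. It is continuous and strictly increasing in $x$ and in $y$ when $\lambda\in(0,1)$, strictly monotone in $\lambda$ when $x\ne y$, and satisfies $\mu_0(x,y)=y$, $\mu_1(x,y)=x$, $\mu_\lambda(x,x)=x$. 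Writing $I=[a,b]$ with $a<b$ (here the compactness of $I$ is used), define $\phi\colon[0,1]\cap\QQ\to I$ by $\phi(\lambda):=\mu_\lambda(b,a)$; it is strictly increasing, with $\phi(0)=a$ and $\phi(1)=b$.

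\emph{Step 4 (the crux: $\phi$ has dense range).} The pivotal identity is that $M_2$ maps rational two-point means of $a,b$ again to such means. Writing $\lambda'=p_1/q$ and $\lambda''=p_2/q$ over a common denominator, repetition invariance and symmetry give $M_2(\phi(\lambda'),\phi(\lambda''))=M_{2q}(\phi(\lambda')^{(q)},\phi(\lambda'')^{(q)})$; replacing, by the reduction property read in reverse, each run $\phi(\lambda')^{(q)}$ by a $q$-tuple of $b$'s and $a$'s with $M_q$-value $\phi(\lambda')$ (namely $p_1$ copies of $b$ and $q-p_1$ of $a$), and similarly for $\phi(\lambda'')^{(q)}$, turns the right-hand side into $M_{2q}(b^{(p_1+p_2)}a^{(2q-p_1-p_2)})=\phi\bigl(\tfrac{\lambda'+\lambda''}{2}\bigr)$. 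Now if the range $R:=\phi([0,1]\cap\QQ)$ were not dense in $I$, then, since $a,b\in R$, the open set $I\setminus\overline R$ would have a component $(c,d)$ with $c<d$ and $c,d\in\overline R$; choosing rationals with $\phi(\rho_k)\uparrow c$ and $\phi(\sigma_k)\downarrow d$ (so $\rho_k<\sigma_k$), the points $\phi\bigl(\tfrac{\rho_k+\sigma_k}{2}\bigr)=M_2(\phi(\rho_k),\phi(\sigma_k))$ lie in $R$ and, by continuity of $M_2$, converge to $M_2(c,d)$, which satisfies $c=M_2(c,c)<M_2(c,d)<M_2(d,d)=d$ by the reflexivity and strict monotonicity of $M_2$ — contradicting $(c,d)\cap\overline R=\emptyset$. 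Hence $R$ is dense, so the strictly increasing $\phi$ extends to a strictly increasing continuous bijection $\Phi\colon[0,1]\to I$, and we put $f:=\Phi^{-1}\colon I\to[0,1]$, a continuous strictly increasing function.

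\emph{Step 5 (identification of $M_n$).} First one upgrades $f(\phi(\lambda))=\lambda$ to $f\bigl(\mu_\lambda(x,y)\bigr)=\lambda f(x)+(1-\lambda)f(y)$ for all $x,y\in I$ and $\lambda\in[0,1]\cap\QQ$: for $x,y$ in the dense set $\Phi([0,1]\cap\QQ)$ this is the same expand-and-recombine computation as in Step 4, and the general case follows from the continuity of $\mu_\lambda$ (in its two arguments) and of $f$, together with density. Finally, for arbitrary $x_1,\ldots,x_n\in I$ the reduction property gives $M_n(x_1,\ldots,x_n)=\mu_{(n-1)/n}\bigl(M_{n-1}(x_1,\ldots,x_{n-1}),x_n\bigr)$; applying $f$ and inducting on $n$, with base case $M_1(x_1)=x_1$, yields $f\bigl(M_n(x_1,\ldots,x_n)\bigr)=\frac1n\sum_{i=1}^n f(x_i)$, that is, $M_n=\mathscr{A}^f_n$. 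I expect the main obstacle to be Step 4: it is the only place where the continuity and strict monotonicity of $M_2$ — as opposed to those of $M_n$ for large $n$ — are genuinely needed, and it is what converts the algebraic, semigroup-like structure supplied by symmetry and the reduction identity into an honest continuous generator; the other steps are essentially bookkeeping with that structure.
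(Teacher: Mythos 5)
The paper does not prove Theorem~\ref{Thm_Kolmogorov}: it is recalled as a classical result and attributed to Kolmogorov, Nagumo and de Finetti (with a pointer to Tikhomirov), so there is no in-paper argument to compare yours against. Your reconstruction is, in substance, the classical Kolmogorov proof, and I find it correct: the reduction property plus symmetry and reflexivity give repetition invariance, hence a well-defined $\mu_{p/q}$; the identity $M_2(\phi(\lambda'),\phi(\lambda''))=\phi\bigl(\tfrac{\lambda'+\lambda''}{2}\bigr)$ combined with reflexivity and strict monotonicity of $M_2$ correctly forces density of $\phi([0,1]\cap\QQ)$ in $I$ (this is indeed the crux, and the only place compactness of $I$ and continuity of $M_2$ are essential); and the inductive identification $f(M_n(x_1,\ldots,x_n))=\frac1n\sum_i f(x_i)$ via $M_n(x_1,\ldots,x_n)=\mu_{(n-1)/n}(M_{n-1}(x_1,\ldots,x_{n-1}),x_n)$ closes the argument. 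One small point worth a sentence in a polished write-up: in Step~4 and Step~5 you invoke continuity of $M_2$ (resp.\ of $\mu_\lambda$) in both arguments simultaneously, whereas the hypothesis is continuity in each variable; this is harmless because for functions that are monotone in each variable, separate continuity implies joint continuity (or, in Step~4, a direct squeeze $M_2(\phi(\rho_k),d)\leq M_2(\phi(\rho_k),\phi(\sigma_k))\leq M_2(c,\phi(\sigma_k))$ suffices), but it deserves to be said.
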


\begin{Rem}\label{Rem1}
(1).
The arithmetic, geometric and harmonic mean are quasi-arithmetic means corresponding to the generators $f:\RR\to \RR$, $f(x):=x$, $x\in\RR$;
 $f:(0,\infty)\to \RR$, $f(x):=\ln(x)$, $x>0$; and $f:(0,\infty)\to \RR$, $f(x) =-x^{-1}$, $x>0$, respectively.

(2).
The conditions in part (ii) of Theorem \ref{Thm_Kolmogorov} can be divided into three groups.
Namely, the continuity is a regularity-type condition, the strict increasingness can be considered as a condition of functional inequality-type, while the other three conditions are of functional equation-type.
\proofend
\end{Rem}

In what follows, we recall the basic concepts related to generalized $\psi$-estimators introduced in Barczy and P\'ales \cite{BarPal2}.

\begin{Def}\label{Def_sign_change}
Let $\Theta$ be a nondegenerate open interval of $\RR$. For a function $f:\Theta\to\RR$, consider the following three level sets
\[
  \Theta_{f>0}:=\{t\in \Theta: f(t)>0\},\qquad
  \Theta_{f=0}:=\{t\in \Theta: f(t)=0\},\qquad
  \Theta_{f<0}:=\{t\in \Theta: f(t)<0\}.
\]
We say that $\vartheta\in\Theta$ is a \emph{point of sign change (of decreasing type) for $f$} if
 \[
 f(t) > 0 \quad \text{for $t<\vartheta$,}
   \qquad \text{and} \qquad
    f(t)< 0 \quad  \text{for $t>\vartheta$.}
 \]
\end{Def}

Note that there can exist at most one element $\vartheta\in\Theta$ which is a point of sign change for $f$.
Further, if $f$ is continuous at a point $\vartheta$ of sign change, then $\vartheta$ is the unique zero of $f$.

Let $X$ be a nonempty set, $\Theta$ be a nondegenerate open interval of $\RR$.
Let $\Psi(X,\Theta)$ denote the class of real-valued functions $\psi:X\times\Theta\to\RR$ such that, for all $x\in X$, there exist $t_+,t_-\in\Theta$ such that $t_+<t_-$ and $\psi(x,t_+)>0>\psi(x,t_-)$.
Roughly speaking, a function $\psi\in\Psi(X,\Theta)$ satisfies the following property: for all $x\in X$,
 the function $\Theta \ni t\mapsto \psi(x,t)$ changes sign (from positive to negative) on the interval $\Theta$ at least once.

\begin{Def}\label{Def_Tn}
We say that a function $\psi\in\Psi(X,\Theta)$
  \begin{enumerate}[(i)]
    \item \emph{possesses the property $[C]$ (briefly, $\psi$ is a $C$-function)} if
           it is continuous in its second variable, i.e., if, for all $x\in X$,
           the mapping $\Theta\ni t\mapsto \psi(x,t)$ is continuous.
    \item \emph{possesses the property $[T_n]$ (briefly, $\psi$ is a $T_n$-function)
           for some $n\in\NN$} if there exists a mapping $\vartheta_{n,\psi}:X^n\to\Theta$ such that,
           for all $\pmb{x}=(x_1,\dots,x_n)\in X^n$ and $t\in\Theta$,
           \begin{align*}
             \psi_{\pmb{x}}(t):=\sum_{i=1}^n \psi(x_i,t) \begin{cases}
                 > 0 & \text{if $t<\vartheta_{n,\psi}(\pmb{x})$,}\\
                 < 0 & \text{if $t>\vartheta_{n,\psi}(\pmb{x})$},
            \end{cases}
           \end{align*}
          that is, for all $\pmb{x}\in X^n$, the value $\vartheta_{n,\psi}(\pmb{x})$ is a point of sign change for the function $\psi_{\pmb{x}}$. If there is no confusion, instead of $\vartheta_{n,\psi}$ we simply write $\vartheta_n$.
          We may call $\vartheta_{n,\psi}(\pmb{x})$ as a generalized $\psi$-estimator for
         some unknown parameter in $\Theta$ based on the realization $\bx=(x_1,\ldots,x_n)\in X^n$. If, for each $n\in\NN$, $\psi$ is a $T_n$-function, then we say that \emph{$\psi$ possesses the property $[T]$ (briefly, $\psi$ is a $T$-function)}.
    \item \emph{possesses the property $[Z_n]$ (briefly, $\psi$ is a $Z_n$-function) for some $n\in\NN$} if it is a $T_n$-function and
    \[
   \psi_{\pmb{x}}(\vartheta_{n,\psi}(\pmb{x}))=\sum_{i=1}^n \psi(x_i,\vartheta_{n,\psi}(\pmb{x}))= 0
    \qquad \text{for all}\quad \pmb{x}=(x_1,\ldots,x_n)\in X^n.
    \]
    If, for each $n\in\NN$, $\psi$ is a $Z_n$-function, then we say that \emph{$\psi$ possesses the property $[Z]$ (briefly, $\psi$ is a $Z$-function)}.
   \end{enumerate}
\end{Def}

It can be seen that if $\psi$ is continuous in its second variable, and, for some $n\in\NN$, it is a $T_n$-function, then it also a $Z_n$-function.
Similarly, if $\psi$ possesses the properties $[C]$ and $[T]$, then it possesses the property $[Z]$ as well.
Furthermore, if $\psi\in\Psi(X,\Theta)$ is a $T_n$-function for some $n\in\NN$, then $\vartheta_{n,\psi}$ is symmetric in the sense that
$\vartheta_{n,\psi}(x_1,\ldots,x_n) = \vartheta_{n,\psi}(x_{\pi(1)},\ldots,x_{\pi(n)})$ holds for all $x_1,\ldots,x_n\in X$ and all permutations $(\pi(1),\ldots,\pi(n))$ of $(1,\ldots,n)$. (This follows from the fact that $\sum_{i=1}^n \psi(x_i,t) = \sum_{i=1}^n \psi(x_{\pi(i)},t)$, $t\in\Theta$.)

In Barczy and P\'ales \cite[Proposition 1]{BarPal2}, we proved that if $(X,\cX)$ is a measurable space,
 $n\in\NN$, $\psi\in\Psi[Z_n](X,\Theta)$, and $\psi$ is measurable in its first variable, then $\vartheta_{n,\psi}:X^n\to \Theta$ is measurable with respect to the sigma-algebras $\cX^n$.

Given $q\in\NN$ and properties $[P_1], \ldots, [P_q]$ belonging to the family of properties
\[
 \big\{[C],[T],[Z]\big\} \cup\big\{[T_n], [Z_n]\colon n\in\NN\big\},
\]
the subclass of $\Psi(X,\Theta)$ consisting of elements possessing the properties $[P_1],\ldots$, $[P_q]$ will be denoted by $\Psi[P_1,\ldots,P_q](X,\Theta)$, i.e.,
\[
  \Psi[P_1,\ldots,P_q](X,\Theta)
  :=\bigcap_{i=1}^q\Psi[P_i](X,\Theta).
\]

The paper is structured as follows.
In Section \ref{Sect1}, we provide a characterisation theorem for generalized $\psi$-estimators, see Theorem \ref{Thm_charac_psi_est}. The key properties of estimators that come into play in the characterisation theorems are the symmetry, the (strong) internality and the asymptotic idempotency, which are introduced in Section \ref{Sect1}.
For an interpretation of these properties from a statistical point of view, see Remark \ref{Rem_properties_interpret}.
In the proof of Theorem \ref{Thm_charac_psi_est}, a separation theorem for Abelian subsemigroups (due to P\'ales \cite[Theorem 1]{Pal89a}) plays a crucial role, of which the application is a novelty in statistics according to our knowledge, and for completeness, we also recall this result, see Theorem \ref{Thm_szeparacios}. We also use Theorem 2.1 and Corollary 3.2 in Barczy and P\'ales \cite{BarPal4} in the proof of Theorem \ref{Thm_charac_psi_est}, which state the internality and asymptotic idempotency of a generalized $\psi$-estimator, respectively.
In Example \ref{Exl_1}, we provide some applications of Theorem \ref{Thm_charac_psi_est}.
In Section \ref{Sect2}, we give a characterisation theorem for (usual) $\psi$-estimators ($Z$-estimators)
corresponding to a function $\psi\in\Psi(X,\Theta)$, which possesses the property $[C]$.

\section{Characterisation theorem for generalized $\psi$-estimators}
\label{Sect1}

In this section, we give an axiomatic characterisation of generalized $\psi$-estimators.
First, we recall a separation theorem for Abelian subsemigroups that plays a crucial role in the proof.

Let $(S,\oplus)$ be an Abelian semigroup, and $A\subseteq S$ be a subsemigroup.

\begin{Rem}
An Abelian semigroup $(S,\oplus)$ without a neutral element can be embedded into an Abelian semigroup with a neutral element.
Indeed, if $(S,\oplus)$ is an Abelian semigroup without a neutral element, then, for any symbol $\Delta\notin S$, one can extend the binary operation $\oplus$ on $S$ to a binary
 operation $\oplus_\Delta$ on $S\cup\{\Delta\}$ by letting $\Delta\oplus_\Delta s:=s\oplus_\Delta \Delta:=s$,
 $s\in S\cup\{\Delta\}$.
Then $(S\cup\{\Delta\},\oplus_\Delta)$ is an Abelian semigroup with a neutral element $\Delta$, and $(S,\oplus)$ is embedded into it.
As a consequence, in the forthcoming definitions and results,  if one prefers, then, without loss of generality, one can assume that an Abelian semigroup admits a neutral element.
\proofend
\end{Rem}

\begin{Def}
Let $(S,\oplus)$ be an Abelian semigroup, and $A\subseteq S$ be a subsemigroup.
The (algebraic) core of $A$ is defined as the subsemigroup
 \[
   \core(A):=\{a\in A : \text{$\forall\, s\in S$ \ $\exists\;n\in\NN$ \ such that $na\oplus s\in A$}\},
 \]
 where $na:=a\oplus\cdots\oplus a$ with $n$ terms on the right hand side.
\end{Def}

One can indeed check that, if $A$ is a subsemigroup of $S$, then $\core(A)$ is also a subsemigroup of $S$, morever, it also holds that $\core(A) \oplus A\subseteq \core(A)$.

Now, we recall a separation theorem for Abelian subsemigroups due to P\'ales \cite[Theorem 1]{Pal89a}.

\begin{Thm}[P\'ales {\cite[Theorem 1]{Pal89a}}]\label{Thm_szeparacios}
Let $(S,\oplus)$ be an Abelian semigroup, $A$ and $B$ are disjoint subsemigroups of $S$ such that $\core(A)\ne \emptyset$ and $\core(B)\ne \emptyset$.
Then there exists a homomorphism $F:S\to\RR$ (i.e., $F(s_1\oplus s_2)=F(s_1)+F(s_2)$, $s_1,s_2\in S$) such that
 \[
   F(a)\geq 0 \geq F(b),\qquad a\in A,\;\; b\in B,
 \]
 and
 \[
  F(a)> 0 > F(b),\qquad a\in \core(A),\;\; b\in \core(B).
 \]
\end{Thm}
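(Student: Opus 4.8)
The argument is an algebraic Hahn--Banach type separation, in the spirit of P\'ales; I only indicate the main steps.

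\emph{Reducing the strict inequalities.} By the preceding Remark we may assume $(S,\oplus)$ has a neutral element $0$. I claim it suffices to produce a homomorphism $F\colon S\to\RR$ with $F(a)\geq 0$ for all $a\in A$, $F(b)\leq 0$ for all $b\in B$, and $F\not\equiv 0$; the strict inequalities on the cores then come for free. Indeed, if $F(\alpha)=0$ for some $\alpha\in\core(A)$ (note $F(\alpha)\geq 0$ since $\alpha\in A$), then for every $s\in S$ one may pick $n\in\NN$ with $n\alpha\oplus s\in A$, whence $0\leq F(n\alpha\oplus s)=nF(\alpha)+F(s)=F(s)$; thus $F\geq 0$ on all of $S$. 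Since $\core(B)\neq\emptyset$ we have $B\neq\emptyset$, and $F\leq 0$ on $B$ then forces $F\equiv 0$ on $B$; picking $\beta\in\core(B)$ and, for each $s$, some $m\in\NN$ with $m\beta\oplus s\in B$ gives $0=F(m\beta\oplus s)=mF(\beta)+F(s)=F(s)$, so $F\equiv 0$ on $S$, contradicting $F\not\equiv 0$. Hence $F>0$ on $\core(A)$, and symmetrically $F<0$ on $\core(B)$.

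\emph{Reformulation as a separation problem.} Since $\RR$ is a group, every homomorphism $S\to\RR$ factors uniquely through the group completion (Grothendieck group) $G$ of $(S,\oplus)$ via the canonical $\iota\colon S\to G$, and further through the rational vector space $V:=G\otimes_{\ZZ}\QQ$. Let $K\subseteq V$ be the convex cone generated by $\iota(B)\cup(-\iota(A))$. A homomorphism $F$ as in the previous step is then the same thing as a nonzero linear functional $\Lambda\colon V\to\RR$ with $\Lambda\leq 0$ on $K$ (take $F:=\Lambda\circ\iota$; non\-vanishing of $F$ will be ensured by $\Lambda(\iota\alpha_0)>0$ for a fixed $\alpha_0\in\core(A)$, equivalently $F(\alpha_0)>0$).

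\emph{Applying Hahn--Banach.} Fix $\alpha_0\in\core(A)$, so that $-\iota\alpha_0\in K$. Two facts drive the construction: \textbf{(A)} $-\iota\alpha_0$ is an algebraic interior (core) point of $K$ in $V$; \textbf{(B)} $\iota\alpha_0\notin K$. Granting these, the Minkowski gauge $p$ of the convex set $K+\iota\alpha_0$ --- which contains $0$ as a core point by (A) --- is a finite sublinear functional on $V$; since $K$ is a cone and $p\leq 1$ on $K+\iota\alpha_0$, a scaling argument gives $p\leq 0$ on $K$; and $p(\iota\alpha_0)=1$ by (B), while $p(-\iota\alpha_0)=0$ because $-\iota\alpha_0$ spans a ray inside the cone $K$. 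Consequently the linear map $t\,\iota\alpha_0\mapsto t$ on the line $\QQ\,\iota\alpha_0$ is dominated by $p$, so by the algebraic Hahn--Banach theorem it extends to a linear $\Lambda\colon V\to\RR$ with $\Lambda\leq p$. Then $\Lambda(\iota\alpha_0)=1\neq 0$ and $\Lambda\leq p\leq 0$ on $K$, and pulling $\Lambda$ back along $\iota$ produces the homomorphism $F$ required above; the first step then completes the proof.

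\emph{Where the hypotheses enter, and the main obstacle.} Fact (A) is where $\core(A)\neq\emptyset$ is used, together with $\core(A)\oplus A\subseteq\core(A)$: an arbitrary $v\in V$ is written $v=\tfrac1N(\iota s-\iota t)$, and one absorbs $s$ (and $t$) into $A$ by adding sufficiently many copies of $\alpha_0$ and clearing denominators. Fact (B) is where $A\cap B=\emptyset$ together with $\core(B)\neq\emptyset$ enters: a witness for $\iota\alpha_0\in K$ would, after clearing denominators, identify an element of $\core(A)$ with an element of $B$ inside $G$, which the core properties of $A$ and $B$ exclude (for instance, one shows no element of $\core(A)$ can map to $0$ in $G$, since an absorbing relation $\gamma\oplus r=r$ would place a common element in $A\cap B$). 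The genuinely nontrivial point --- and the reason the statement is not a formal corollary of the vector-space Hahn--Banach theorem --- is establishing (A) and (B) simultaneously: one must control a Minkowski-type gauge built from $\NN$-multiples in place of real scalar multiples, showing at once that it is everywhere finite (forced by $\core(A)\neq\emptyset$) and strictly positive at $\iota\alpha_0$ (forced by $A\cap B=\emptyset$ and $\core(B)\neq\emptyset$). These two demands pull in opposite directions, and reconciling them is exactly what consumes both core hypotheses; I expect this to be the crux of the proof.
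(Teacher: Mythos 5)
First, a point of reference: the paper does not prove Theorem \ref{Thm_szeparacios} at all --- it is quoted verbatim from P\'ales \cite[Theorem 1]{Pal89a} and used as a black box --- so there is no in-paper argument to measure your proposal against. Judged on its own terms, your architecture is reasonable, and your first step (deducing the strict inequalities on the cores from weak separation together with $F\not\equiv 0$) is complete and correct. The problem is that what you submit is a strategy rather than a proof: everything hinges on your facts (A) (that $-\iota\alpha_0$ is an algebraic interior point of $K$ in $V$) and (B) (that $\iota\alpha_0\notin K$), and you explicitly leave both unestablished, saying you ``expect this to be the crux''. That is precisely where the content of the theorem lives; without (A) and (B) the Hahn--Banach step is vacuous.

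Moreover, the hint you give for (A) does not work as stated. Writing $v=\tfrac1N(\iota s-\iota t)$, absorbing into $A$ by adding copies of $\alpha_0$ only produces elements of $-\iota(A)\subseteq K$, i.e.\ it shows $-\iota t-n\iota\alpha_0\in K$ (for $n$ with $n\alpha_0\oplus t\in A$); it cannot produce the needed $+\iota s$ contribution, since the only ``positive'' generators of $K$ come from $\iota(B)$. The correct absorption uses both cores: pick $\beta\in\core(B)$ and $m$ with $m\beta\oplus s\in B$, giving $\iota s+m\iota\beta\in K$, then $k$ with $k\alpha_0\oplus\beta\in A$, giving $-\iota\beta-k\iota\alpha_0\in K$; summing these cone elements yields $\iota s-\iota t-(n+mk)\iota\alpha_0\in K$, from which (A) follows by rescaling. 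Fact (B) likewise requires the two cases (witness with empty or nonempty $B$-part) to be worked out in $G$, where $\iota$ need not be injective: a relation $\gamma\oplus r=b\oplus r$ with $\gamma\in\core(A)$ and $b\in B$ must be iterated and combined with $\core(B)\neq\emptyset$ to manufacture an element of $A\cap B$, and the degenerate relation $\gamma\oplus r=r$ must be excluded similarly. Both facts are true and your scheme can be completed along these lines, but as written the proposal has a genuine gap exactly where the hypotheses $\core(A)\neq\emptyset$, $\core(B)\neq\emptyset$ and $A\cap B=\emptyset$ have to be spent.
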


Next, we recall two properties of a real-valued function defined on an Abelian semigroup that will have a key role in the forthcoming Theorem \ref{Thm_charac_psi_est}.

\begin{Def}
Let $(S,\oplus)$ be an Abelian semigroup.
We say that a function $f:S\to\RR$ is \emph{internal} if, for all $r,s\in S$,
\Eq{*}{
  \min(f(r),f(s))\leq f(r\oplus s)\leq \max(f(r),f(s))
}
holds.
If both inequalities are strict whenever $r,s\in S$ and $f(r)\neq f(s)$, then $f$ is called \emph{strictly internal}.
\end{Def}

If $f:S\to\RR$ is an internal function, then $f$ is also idempotent, that is, the equality $f(nr)=f(r)$ holds for all $n\in\NN$ and $r\in S$. (A simple proof can be obtained by induction.)
This motivates the following definition.

\begin{Def}\label{Def_asymp_idem}
Let $(S,\oplus)$ be an Abelian semigroup.
We say that a function $f:S\to\RR$ is  \emph{asymptotically idempotent} if
\Eq{*}{
  \lim_{n\to\infty} f(nr\oplus s)=f(r) \qquad
  \text{ for all $r,s\in S$.}
 }
\end{Def}

A notion that is similarly termed as the above defined property 'asymptotically idempotent'
appears in Ricci \cite[Definition 8]{Ric}, where a sequence $f_n:[a,b]^n\to\RR$, $n\in\NN$ (where $a<b$, $a,b\in\RR$) of aggregation functions (see Ricci \cite[Definition 1]{Ric}) is called asymptotically idempotent if
 \[
   \lim_{n\to\infty} f_n\big(\underbrace{x,\ldots,x}_{n}\big)=f(x) \qquad \text{for all $x\in[a,b]$.}
 \]

Given a function $M:\bigcup_{n=1}^\infty X^n\to\Theta$ and $m\in\NN$, we will denote by $M_m$ the restriction of $M$ onto $X^m$.
In the next theorem, we give an axiomatic characterisation of generalized $\psi$-estimators.

\begin{Thm}\label{Thm_charac_psi_est}
Let $X$ be a nonempty set, $\Theta$ be a nondegenerate open interval of $\RR$, and $M:\bigcup_{n=1}^\infty X^n\to\Theta$ be a function  such that $\inf M_1(X)=\inf \Theta$ and $\sup M_1(X)=\sup \Theta$.
Then the following two statements are equivalent:
 \vspace{-8pt}
 \begin{enumerate}[(i)]\itemsep=-4pt
  \item There exists a function $\psi\in\Psi[T](X,\Theta)$ such that, for all $n\in\NN$ and $x_1,\ldots,x_n\in X$, it holds that
  \Eq{*}{
  \vartheta_{n,\psi}(x_1,\ldots,x_n)=M_n(x_1,\ldots,x_n).
  }
 \item The function $M$ possesses the following properties:
       \begin{itemize}
         \item[(a)] Symmetry: $M_n$ is symmetric for each $n\in\NN$,
                i.e., 
                \[
                  M_n(x_1,\ldots,x_n) = M_n(x_{\pi(1)},\ldots,x_{\pi(n)})
                \]
                for all $x_1,\ldots,x_n \in X$ and each permutation $(\pi(1),\ldots,\pi(n))$ of $(1,\ldots,n)$,
         \item[(b)] Internality (mean-type property): for each $n,k\in\NN$ and $(x_1,\ldots,x_n)\in X^n$, $(y_1,\ldots,y_k)\in X^k$, we have
               \begin{align*}
                   \min(M_n(x_1,\ldots,x_n),M_k(y_1,\ldots,y_k))
                        &\leq M_{n+k}(x_1,\ldots,x_n,y_1,\ldots,y_k)\\
                        &\leq \max(M_n(x_1,\ldots,x_n),M_k(y_1,\ldots,y_k)),
               \end{align*}
       \item[(c)] Asymptotic idempotency:
          for all $k\in\NN$, and $x_1,\dots,x_k,y\in X$,
        \[
          \lim_{n\to\infty}M_{kn+1}(\underbrace{x_1,\ldots,x_1}_{n},\dots,\underbrace{x_k,\ldots,x_k}_{n},y) = M_k(x_1,\dots,x_k).
        \]
       \end{itemize}
 \end{enumerate}
\end{Thm}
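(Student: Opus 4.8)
The plan is to prove the two implications separately; the implication (i)$\Rightarrow$(ii) is elementary, whereas (ii)$\Rightarrow$(i) is the substantial one and rests on the separation theorem (Theorem~\ref{Thm_szeparacios}). For (i)$\Rightarrow$(ii), assume $\psi\in\Psi[T](X,\Theta)$ with $\vartheta_{n,\psi}=M_n$ for all $n\in\NN$. Property~(a) holds because, for any permutation $\pi$, the functions $\psi_{(x_1,\ldots,x_n)}$ and $\psi_{(x_{\pi(1)},\ldots,x_{\pi(n)})}$ coincide on $\Theta$ and hence share the same point of sign change. For~(b), write $a:=M_n(\bx)$, $b:=M_k(\by)$, $c:=M_{n+k}(\bx,\by)$ and suppose, say, $c>\max(a,b)$; choosing any $t\in(\max(a,b),c)\subseteq\Theta$ we get $\psi_{\bx}(t)<0$ and $\psi_{\by}(t)<0$, so $\psi_{(\bx,\by)}(t)=\psi_{\bx}(t)+\psi_{\by}(t)<0$, contradicting $t<c=\vartheta_{n+k,\psi}(\bx,\by)$; the case $c<\min(a,b)$ is symmetric, so $\min(a,b)\le c\le\max(a,b)$. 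For~(c), put $a:=M_k(x_1,\ldots,x_k)=\vartheta_{k,\psi}(x_1,\ldots,x_k)$, let $b_n$ denote the left-hand side, and note that $b_n$ is the point of sign change of $\Phi_n(t):=n\,\psi_{(x_1,\ldots,x_k)}(t)+\psi(y,t)$. For $\vare>0$ with $[a-\vare,a+\vare]\subseteq\Theta$ we have $\psi_{(x_1,\ldots,x_k)}(a-\vare)>0$ and $\psi_{(x_1,\ldots,x_k)}(a+\vare)<0$, hence $\Phi_n(a-\vare)\to+\infty$ and $\Phi_n(a+\vare)\to-\infty$; this forces $a-\vare\le b_n\le a+\vare$ for all large $n$, and letting $\vare\downarrow0$ gives $b_n\to a$.

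For (ii)$\Rightarrow$(i), let $(S,\oplus)$ be the Abelian semigroup of all finite nonempty multisets of elements of $X$ with $\oplus$ the multiset union; equivalently, $S$ is $\bigcup_{n\ge1}X^n$ with concatenation, factored by permutations. By symmetry~(a), $M$ induces a well-defined map $S\to\Theta$, still denoted $M$, which is internal by~(b) and satisfies $M(n\bx\oplus(y))\to M(\bx)$ as $n\to\infty$ for all $\bx\in S$, $y\in X$ by~(c) (here $n\bx:=\bx\oplus\cdots\oplus\bx$, $n$ times). The crucial preliminary step is a \emph{general asymptotic idempotency lemma}: $\lim_{n\to\infty}M(n\bx\oplus\bs)=M(\bx)$ for all $\bx,\bs\in S$. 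To prove it, set $L:=M(\bx)$ and $a_n:=M(n\bx\oplus\bs)$; applying internality to $(n+1)\bx\oplus\bs=\bx\oplus(n\bx\oplus\bs)$ shows $a_{n+1}$ lies between $L$ and $a_n$, so $(|a_n-L|)_{n\ge1}$ is non-increasing with some limit $d\ge0$. Writing $\bs$ as the union of its one-element sub-multisets $(y_1),\ldots,(y_m)$, we have $Nm\bx\oplus\bs=\bigoplus_{j=1}^{m}\bigl(N\bx\oplus(y_j)\bigr)$ for each $N\in\NN$, so by the finitely-many-summand form of internality (obtained from~(b) by induction) $a_{Nm}$ is squeezed between $\min_{1\le j\le m}M(N\bx\oplus(y_j))$ and $\max_{1\le j\le m}M(N\bx\oplus(y_j))$, both of which tend to $L$ by~(c); hence $a_{Nm}\to L$, forcing $d=0$, i.e.\ $a_n\to L$.

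Now fix $t\in\Theta$ and put $A_t:=\{\bx\in S:M(\bx)>t\}$ and $B_t:=\{\bx\in S:M(\bx)<t\}$; by internality these are disjoint subsemigroups of $S$, and since $\inf M_1(X)=\inf\Theta<t<\sup\Theta=\sup M_1(X)$ there are $x,x'\in X$ with $M_1(x)>t>M_1(x')$, so $A_t$ and $B_t$ are nonempty. The lemma yields $\core(A_t)=A_t$ and $\core(B_t)=B_t$: if $M(\bx)>t$ then for every $\bs\in S$ we have $M(n\bx\oplus\bs)\to M(\bx)>t$, so $n\bx\oplus\bs\in A_t$ for all large $n$, whence $\bx\in\core(A_t)$; the argument for $B_t$ is analogous. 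By Theorem~\ref{Thm_szeparacios} there is a homomorphism $F_t:S\to\RR$ with $F_t(\bx)>0$ whenever $M(\bx)>t$ and $F_t(\bx)<0$ whenever $M(\bx)<t$. Define $\psi(x,t):=F_t((x))$ for $x\in X$, $t\in\Theta$. Since $F_t$ is additive, $\psi_{(x_1,\ldots,x_n)}(t)=\sum_{i=1}^{n}F_t((x_i))=F_t\bigl((x_1,\ldots,x_n)\bigr)$, which is $>0$ for $t<M_n(x_1,\ldots,x_n)$ and $<0$ for $t>M_n(x_1,\ldots,x_n)$; choosing $t_+\in(\inf\Theta,M_1(x))$ and $t_-\in(M_1(x),\sup\Theta)$ shows $\psi\in\Psi(X,\Theta)$, and the displayed sign pattern shows that $\psi$ is a $T_n$-function with $\vartheta_{n,\psi}=M_n$ for every $n\in\NN$, i.e.\ $\psi\in\Psi[T](X,\Theta)$, as required.

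The main obstacle is the general asymptotic idempotency lemma together with the identities $\core(A_t)=A_t$ and $\core(B_t)=B_t$: the separation theorem only produces strict sign separation on the cores of the two subsemigroups, so the whole argument hinges on using hypotheses~(b) and~(c) and the endpoint conditions on $M_1$ to show that these cores actually exhaust $A_t$ and $B_t$.
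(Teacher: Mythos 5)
Your proposal is correct and follows essentially the same route as the paper: pass to the free Abelian semigroup of multisets, use internality to make $\{M>t\}$ and $\{M<t\}$ disjoint subsemigroups, use asymptotic idempotency plus the singleton decomposition of $\bs$ and iterated internality to show these sets equal their own cores, and then apply the separation theorem to define $\psi(x,t):=F_t(x)$. The only differences are cosmetic: for (i)$\Rightarrow$(ii) you supply direct elementary arguments where the paper cites results from a companion paper, and for the core computation you prove the full limit $M(n\bx\oplus\bs)\to M(\bx)$ (via a common $N$ and monotonicity of $|a_n-L|$) where the paper only extracts one suitable exponent $n=n_1+\cdots+n_\ell$ --- both rest on the same decomposition-plus-internality trick.
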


\begin{Rem}\label{Rem_properties_interpret}
(1). The properties (a), (b) and (c) in part (ii) of Theorem \ref{Thm_charac_psi_est} may be interpreted from a statistical point of view as follows.

The symmetry property (a) means that a generalized $\psi$-estimator based on the realization $(x_1,\ldots,x_n)\in X^n$ does not depend on the order of the realization in question, only the values of $x_1,\ldots,x_n$ are important, but not their order in which we observe them.
 
The internality property (b) reflects the following somewhat natural requirement for an estimator.
Supposing that one estimates an unknown parameter based on the realizations $(x_1,\ldots,x_n)\in X^n$
and $(y_1,\ldots,y_k)\in X^k$, respectively, then it is natural to expect that if one estimates the unknown parameter based on the joint realization
$(x_1,\ldots,x_n,y_1,\ldots,y_k)\in X^{n+k}$, then this later estimate should be in the convex hull of the two original estimates.
In Example \ref{Exl_1}, we also highlight the role of 
the internality property (b) in Theorem \ref{Thm_charac_psi_est}. Roughly speaking, the estimate based on the joint realization cannot be worse than those based on the sub-realizations.
 
The asymptotic idempotency property (c) can be interpreted as follows.
Given $n,k\in\NN$, suppose that one observes $(x_1,\ldots,x_k)\in X^k$ repeatedly $n$-times and then $y\in X$.
The property (c) means that the generalized $\psi$-estimator does not essentially depend on $y\in X$ in the sense that its effect disappears as $n\to\infty$.
One could say that, in this scenario, the observation $y$ can be considered asymptotically as an outlier.

(2). A possible direction for future research is to explore the extension of our setup and Theorem \ref{Thm_charac_psi_est} from a one-dimensional parameter set $\Theta$ to a multidimensional one (note that, in our present setup, $\Theta$ is supposed to be a nondegenerate open interval of $\RR$).
\proofend      
\end{Rem}

\noindent{\bf Proof of Theorem \ref{Thm_charac_psi_est}.}
First, assume that assertion (i) holds.
Then $M$ is symmetric (see the paragraph after Definition \ref{Def_Tn}) and is internal (see Theorem 2.1 in Barczy and P\'ales \cite{BarPal4}).
The asymptotic idempotency of $M$ follows from Corollary 3.2 in Barczy and P\'ales \cite{BarPal4}.
Indeed, with the notation $\bx:=(x_1,\ldots,x_k)\in X^k$, Corollary 3.2 in Barczy and P\'ales \cite{BarPal4} implies that
 \[
   \lim_{n\to\infty} \vartheta_{kn+1,\psi}(\underbrace{\bx,\ldots,\bx}_{n},y) =  \vartheta_{k,\psi}(\bx), \qquad y\in X.
 \]
This, together with the symmetry of $\vartheta_{kn+1,\psi}$, yield the asymptotic idempotency of $M$.

Next, assume that assertion (ii) holds.
Let $(S(X),\oplus)$ be the free Abelian semigroup generated by the elements of $X$, i.e., $S(X)$ consists of all the finite (unordered) sequences (or unordered strings)
 of $X$ having positive lengths, furnished with the string concatenation $\oplus$ as an operation.
Note that, for all $n\in\NN$, $x_1,\ldots,x_n \in X$ and each permutation $(\pi(1),\ldots,\pi(n))$ of $(1,\ldots,n)$, the (unordered) sequences $x_1\oplus\cdots\oplus x_n$ and $x_{\pi(1)}\oplus\cdots\oplus x_{\pi(n)}$ coincide.

Let us introduce the mapping $\mu:S(X)\to \Theta$ by
 \[
   \mu(x_1\oplus\cdots\oplus x_n):=M_n(x_1,\ldots,x_n), \qquad n\in\NN, \;\; x_1,\ldots,x_n\in X.
 \]
Due to the symmetry of $M$, the mapping $\mu$ is well-defined, since for all $n\in\NN$, $x_1,\ldots,x_n \in X$ and permutation $(\pi(1),\ldots,\pi(n))$ of $(1,\ldots,n)$,
 we have
 \[
  \mu(x_{\pi(1)}\oplus\cdots\oplus x_{\pi(n)})
     = M(x_{\pi(1)},\ldots,x_{\pi(n)})
     = M(x_1,\ldots,x_n)
     = \mu(x_1\oplus\cdots\oplus x_n),
 \]
i.e., the mapping $\mu$ takes the same value on the (unordered) sequences $x_1\oplus\cdots\oplus x_n$ and $x_{\pi(1)}\oplus\cdots\oplus x_{\pi(n)}$  (that are the same elements of $S(X)$).

Let $t\in\Theta$ be fixed and define
 \[
 A_t:=\{ s\in S(X) : \mu(s) < t\}, \qquad \qquad B_t:=\{ s\in S(X) : \mu(s) > t\}.
 \]
We verify the conditions of Theorem \ref{Thm_szeparacios} with the choices $S:=S(X)$, $A:=A_t$ and $B:=B_t$.

Since $\Theta$ is open, $\inf \mu(X)=\inf M_1(X)=\inf\Theta$ and $\sup \mu(X)=\sup M_1(X)=\sup\Theta$, we can see that neither $A_t$ nor $B_t$ is empty.

Using that $M$ is internal, it follows that $\mu$ also possesses the following internality property:
\begin{align}\label{help_m_intern}
  \min(\mu(r),\mu(s))
    \leq \mu(r\oplus s)
    \leq  \max(\mu(r),\mu(s)), \qquad r,s\in S(X).
\end{align}
To prove the left hand side inequality, let $r,s\in S(X)$ be arbitrary. Then, there exist $n,k\in\NN$ and $x_1,\dots,x_n,y_1,\dots,y_k\in X$ such that $r=x_1\oplus\cdots\oplus x_n$ and $s=y_1\oplus\cdots\oplus y_k$. In view of the internality of $M$, we get
\Eq{*}{
 \min(\mu(r),\mu(s))
 &=\min(\mu(x_1\oplus\cdots\oplus x_n),\mu(y_1\oplus\cdots\oplus y_k))\\
 &=\min\big(M_n(x_1,\dots,x_n),M_k(y_1,\dots,y_k)\big)\\
 &\leq M_{n+k}(x_1,\dots,x_n,y_1,\dots,y_k)\\
 &=\mu(x_1\oplus\cdots\oplus x_n\oplus y_1\oplus\cdots\oplus y_k)\\&=\mu(r\oplus s).
}
The proof of the right hand side inequality in \eqref{help_m_intern} is completely analogous.

If $r,s\in A_t$, then $\mu(r)<t$ and $\mu(s)<t$, and \eqref{help_m_intern} yields that $\mu(r\oplus s)< t$, i.e., $r\oplus s\in A_t$.
Consequently, $A_t$ is a subsemigroup of $S(X)$.
A similar argument shows that $B_t$ is a subsemigroup of $S(X)$ as well.

The subsemigroups $A_t$ and $B_t$ are disjoint, since otherwise, there would be an element $s\in S(X)$ such that $\mu(s)<t < \mu(s)$, leading us to an obvious contradiction.

Finally, applying the asymptotic idempotency of $M$, we show that $\core(A_t) = A_t$ and $\core(B_t) = B_t$, yielding, in particular, that $\core(A_t)\ne \emptyset$
 and $\core(B_t)\ne\emptyset$. To prove that $A_t\subseteq \core(A_t)$, let $a\in A_t$ and $s\in S(X)$ be arbitrary.
Then there exist $k,\ell\in\NN$ and $x_1,\dots,x_k,y_1,\dots,y_\ell\in X$ such that $a=x_1\oplus\cdots\oplus x_k$ and $s=y_1\oplus\cdots\oplus y_\ell$.
Then $\mu(a)<t$, and, by the symmetry and the asymptotic idempotency of $M$, for all $i\in\{1,\dots,\ell\}$, we have
 \[
   \lim_{n\to\infty} \mu(na\oplus y_i)
       = \lim_{n\to\infty}M_{kn+1}(\underbrace{x_1,\ldots,x_1}_{n},\dots,\underbrace{x_k,\ldots,x_k}_{n},y_i) = M_k(x_1,\dots,x_k) = \mu(a)<t.
 \]
Therefore, for all $i\in\{1,\dots,\ell\}$, there exists $n_i\in\NN$ such that
\Eq{*}{
  \mu(n_ia\oplus y_i)<t.
}
Using that $(n_1+\dots+n_\ell)a\oplus s = (n_1a\oplus y_1)\oplus (n_2a\oplus y_2) \oplus \cdots \oplus (n_\ell a\oplus y_\ell)$
 and the internality property of $\mu$ established in \eqref{help_m_intern}, we can get that
\Eq{*}{
  \mu((n_1+\dots+n_\ell)a\oplus s)\leq
  \max_{1\leq i\leq\ell}\mu(n_ia\oplus y_i)<t.
}
This implies that $na\oplus s\in A_t$ with $n:=n_1+\dots+n_\ell$, therefore $a\in\core(A_t)$, which completes the proof of the inclusion $A_t\subseteq \core(A_t)$.
Using that $\core(A_t)\subseteq A_t$ also holds (due to the definition of $\core(A_t)$), we obtain the equality $\core(A_t) = A_t$, as desired.
A similar argument shows that $\core(B_t) = B_t$.

Applying Theorem \ref{Thm_szeparacios}, for all $t\in\Theta$, there exists a homomorphism $F_t:S(X)\to \RR$ such that
 \Eq{sep}{
   F_t(a) < 0 < F_t(b),\qquad a\in A_t,\;\; b\in B_t.
 }
Let us define $\psi:X\times \Theta \to\RR$ by the equality $\psi(x,t):= F_t(x)$, $x\in X$, $t\in\Theta$. Using that $F_t$ is a homomorphism, for all $t\in\Theta$, $n\in\NN$ and $x_1,\ldots,x_n\in X$, we have
 \[
  \sum_{i=1}^n \psi(x_i,t) = \sum_{i=1}^n F_t(x_i) = F_t(x_1\oplus \cdots \oplus x_n).
 \]
Hence, in view of \eqref{sep}, for all $t\in\Theta$, $n\in\NN$ and $x_1,\ldots,x_n\in X$, we get that
 \[
 \sum_{i=1}^n \psi(x_i,t) > 0 \qquad \text{if \ $\mu(x_1\oplus \cdots\oplus x_n)>t$,}
 \]
 and
 \[
  \sum_{i=1}^n \psi(x_i,t) < 0 \qquad \text{if \ $\mu(x_1\oplus \cdots\oplus x_n)<t$,}
 \]
 yielding that $\psi\in\Psi[T](X,\Theta)$ such that $\vartheta_{n,\psi}(x_1,\ldots,x_n) = \mu(x_1\oplus \cdots\oplus x_n) = M_n(x_1,\ldots,x_n)$,
 as desired.
\proofend

\begin{Rem}
(1). Note that for the implication $(i)\Longrightarrow(ii)$ in Theorem \ref{Thm_charac_psi_est},
 the assumptions $\inf M_1(X)=\inf \Theta$ and $\sup M_1(X)=\sup \Theta$ were not needed.
Further, observe that, in the implication $(ii)\Longrightarrow(i)$ in Theorem \ref{Thm_charac_psi_est},
 we only established the existence of an appropriate function $\psi$ corresponding to a given function $M$,   
 but we did not provide any explicit method how one can construct such a $\psi$.
The reason for it is that, in the corresponding proof, we use a separation theorem for Abelian subsemigroups (see Theorem \ref{Thm_szeparacios}), which, in general, does not provide any explicit construction for a homomorphism which separates the two Abelian subsemigroups.

(2).
Conditions (a), (b) and (c) in part (ii) of Theorem \ref{Thm_charac_psi_est} are of the type of functional equation, functional inequality and regularity, respectively.
These three types of conditions also appeared in the characterisation theorem of quasi-arithmetic means, see Theorem \ref{Thm_Kolmogorov}
 and part (2) of Remark \ref{Rem1}.
\proofend
\end{Rem}

In the next example, we provide some applications of Theorem \ref{Thm_charac_psi_est}.

\begin{Ex}\label{Exl_1}
(1). Let $\alpha>0$ and let $\xi$ be an absolutely continuous random variable with a density function
   \begin{align*}
        f_\xi(x):=\begin{cases}
                2\alpha x (1-x^2)^{\alpha-1}  & \text{if $x\in(0,1)$,}\\
                0 & \text{otherwise.}
   \end{cases}
  \end{align*}
Then, one can check that given an $n\in\NN$ and a realization $x_1,\ldots,x_n\in(0,1)$
 of a sample of size $n$ for $\xi$, there exists a unique MLE of $\alpha$ based on $x_1,\ldots,x_n$,
 and it takes the form
 \begin{align*}
  -\frac{n}{\sum_{i=1}^n \ln(1-x_i^2)}
  = -\frac{1}{\ln\left( \prod_{i=1}^n (1-x_i^2)^{\frac{1}{n}} \right)}. 
 \end{align*}
One can easily check that the above MLE of $\alpha$ is, in fact, a $\psi$-estimator corresponding to the function $\psi:(0,1)\times(0,\infty)\to\RR$,
 \[
     \psi(x,\alpha) = \frac{1}{\alpha} + \ln(1-x^2), \qquad x\in(0,1),\quad \alpha>0.
 \]
Since 
 \[
   \inf\left\{-\frac{1}{\ln(1-x^2)} : x\in(0,1) \right\}
      = 0 =\inf(\RR_{++})
 \]   
 and
 \[
   \sup\left\{-\frac{1}{\ln(1-x^2)} : x\in(0,1) \right\}
      = \infty = \sup(\RR_{++}),
 \]   
 the conditions of Theorem \ref{Thm_charac_psi_est} are satisfied for $X:=(0,1)$, $\Theta:=\RR_{++}$
 and for the estimator $M:\bigcup_{n=1}^\infty (0,1)^n \to \RR_{++}$,
 \[
   M(x_1,\ldots,x_n):= -\frac{1}{\ln\left( \prod_{i=1}^n (1-x_i^2)^{\frac{1}{n}} \right)},
      \qquad x_1,\ldots,x_n\in(0,1),\qquad n\in\NN.
 \]  
Therefore, by Theorem \ref{Thm_charac_psi_est}, the properties (a), (b) and (c) in part (ii)
 of this theorem hold for $M$.
In what follows, we directly (without using Theorem \ref{Thm_charac_psi_est}) verify that these properties indeed hold.
 
The symmetry property (a) trivially holds, since $\prod_{i=1}^n (1-x_i^2) = \prod_{i=1}^n (1-x_{\pi(i)}^2)$
 for each permutation $(\pi(1),\ldots,\pi(n))$ of $(1,\ldots,n)$.
   
To check the internality property (b), let $n,k\in\NN$, $(x_1,\ldots,x_n)\in (0,1)^n$ and $(y_1,\ldots,y_k)\in (0,1)^k$.
Supposing that $M_n(x_1,\ldots,x_n)\leq M_k(y_1,\ldots,y_k)$, we need to check that 
 \begin{align*}
   -\frac{1}{\ln\left( \prod_{i=1}^n (1-x_i^2)^{\frac{1}{n}} \right)}
     \leq -\frac{1}{\ln\left( \left(\prod_{i=1}^n (1-x_i^2) \prod_{j=1}^k (1-y_j^2)\right)^{\frac{1}{n+k}}  \right)}
      \leq  -\frac{1}{\ln\left( \prod_{j=1}^k (1-y_j^2)^{\frac{1}{k}} \right)},
 \end{align*}
 which is equivalent to
 \begin{align*} 
   \ln\left( \prod_{i=1}^n (1-x_i^2)^{\frac{1}{n}} \right)
     \leq \ln\left( \left(\prod_{i=1}^n (1-x_i^2) \prod_{j=1}^k (1-y_j^2)\right)^{\frac{1}{n+k}}  \right)
     \leq \ln\left( \prod_{j=1}^k (1-y_j^2)^{\frac{1}{k}} \right),
 \end{align*}
  i.e., by the strict increasingness of $\ln$,
 \begin{align*} 
    \prod_{i=1}^n (1-x_i^2)^{\frac{1}{n}} 
     \leq  \left(\prod_{i=1}^n (1-x_i^2) \prod_{j=1}^k (1-y_j^2)\right)^{\frac{1}{n+k}}
     \leq  \prod_{j=1}^k (1-y_j^2)^{\frac{1}{k}}.
 \end{align*}
This is equivalent that the following two inequalities hold
 \[
     \prod_{i=1}^n (1-x_i^2)^{\frac{k}{n(n+k)}}
        \leq  \prod_{j=1}^k (1-y_j^2)^{\frac{1}{n+k}}
        \qquad \text{and}\qquad
      \prod_{i=1}^n (1-x_i^2)^{\frac{1}{n+k}}
           \leq   \prod_{j=1}^k (1-y_j^2)^{\frac{n}{k(n+k)}},
 \]  
  which are both equivalent to the inequality 
  $M_n(x_1,\ldots,x_n)\leq M_k(y_1,\ldots,y_k)$ being satisfied.
  
To check the asymptotic idempotency property (c), let $k\in\NN$ and $x_1,\ldots,x_k,y\in(0,1)$.
Then
 \begin{align*}
  &\lim_{n\to\infty}M_{kn+1}(\underbrace{x_1,\ldots,x_1}_{n},\dots,\underbrace{x_k,\ldots,x_k}_{n},y)\\
  &\qquad= \lim_{n\to\infty} \frac{-1}{\ln\left(\left( \prod_{i=1}^k (1-x_i^2)^n (1-y^2) \right)^{\frac{1}{nk+1}}  \right) }\\
  &\qquad = \lim_{n\to\infty}   
               \frac{-1}{ \frac{n}{nk+1} \ln\left( \prod_{i=1}^k (1-x_i^2)\right) + \frac{1}{nk+1}\ln(1-y^2) }
\end{align*}
\begin{align*}               
 &=  \frac{-1}{\frac{1}{k}\ln\left( \prod_{i=1}^k (1-x_i^2) \right)}               
   = M_k(x_1,\dots,x_k),
 \end{align*}
 as desired.  
   
(2).
Let $X:=\Theta:=\RR_{++}$ and define an estimator $M:\bigcup_{n=1}^\infty\RR_{++}^n\to\RR_{++}$ by
\[
  M(x_1,\dots,x_n):=\frac1{2n}\Big(x_1+\dots+x_n+n\sqrt[n]{x_1\cdots x_n}\Big),
  \qquad n\in\NN,\,\,x_1,\dots,x_n\in\RR_{++}.
\]
Since $M_1(x) = x$, $x\in\RR_{++}$, we have that $M_1(\RR_{++}) = \RR_{++}$, yielding that 
 $\inf(M_1(\RR_{++})) = 0 =\inf(\RR_{++})$ and $\sup(M_1(\RR_{++})) = \infty = \sup(\RR_{++})$.
Further, the internality property (b) of part (ii) in Theorem \ref{Thm_charac_psi_est} does not hold for $M$,
 since for the realization $(x_1,x_2,x_3,x_4):=(1,81,25,25)$, it does not hold that
 \[
  \min(M_2(1,81),M_2(25,25))\leq M_4(1,81,25,25)\leq \max(M_2(1,81),M_2(25,25)),
\]
since we have that $M_2(1,81)=M_2(25,25)=25$ and $M_4(1,81,25,25)=24$.
Therefore, by Theorem \ref{Thm_charac_psi_est}, there does not exist a $\psi\in\Psi[T](\RR_{++},\RR_{++})$ such that
 \[
  M(x_1,\dots,x_n)=\vartheta_{n,\psi}(x_1,\dots,x_n),\qquad n\in\NN,\,\,x_1,\dots,x_n\in\RR_{++},
 \]
 i.e., $M$ cannot be a generalized $\psi$-estimator.
\proofend
\end{Ex}

\section{Characterisation theorem for (usual) $\psi$-estimators }
\label{Sect2}

In this section, we give an axiomatic characterisation of usual $\psi$-estimators ($Z$-estimators)
corresponding to a function $\psi\in\Psi(X,\Theta)$, which possesses the property $[C]$.

\begin{Thm}\label{Thm_charac_cont_psi_est}
Let $X$ be a nonempty set, $\Theta$ be a nondegenerate open interval of $\RR$, and $M:\bigcup_{n=1}^\infty X^n\to\Theta$ be a function  such that $\inf M_1(X)=\inf \Theta$ and $\sup M_1(X)=\sup \Theta$.
Then the following two statements are equivalent:
 \vspace{-8pt}
 \begin{enumerate}[(i)]\itemsep=-4pt
  \item There exists a function $\psi\in\Psi[Z,C](X,\Theta)$ such that, for all $n\in\NN$ and $x_1,\ldots,x_n\in X$, it holds that
  \Eq{*}{
  \vartheta_{n,\psi}(x_1,\ldots,x_n)=M_n(x_1,\ldots,x_n).
  }
  In particular, $M_n(x_1,\ldots,x_n)$ is nothing else but the $Z$-estimator based on $x_1,\ldots,x_n$.
 \item The function $M$ possesses the following properties:
       \begin{itemize}
         \item[(a)] Symmetry: $M_n$ is symmetric for each $n\in\NN$,
                i.e., 
                \[
                  M_n(x_1,\ldots,x_n) = M_n(x_{\pi(1)},\ldots,x_{\pi(n)})
                \]   
                for all $x_1,\ldots,x_n \in X$ and each permutation $(\pi(1),\ldots,\pi(n))$ of $(1,\ldots,n)$,
         \item[(b)] Strict internality (strict mean-type property): for each $n,k\in\NN$ and $(x_1,\ldots,x_n)\in X^n$, $(y_1,\ldots,y_k)\in X^k$, we have
               \begin{align*}
                   \min(M_n(x_1,\ldots,x_n),M_k(y_1,\ldots,y_k))
                        &\leq M_{n+k}(x_1,\ldots,x_n,y_1,\ldots,y_k)\\
                        &\leq \max(M_n(x_1,\ldots,x_n),M_k(y_1,\ldots,y_k)),
               \end{align*}
               and if $M_n(x_1,\ldots,x_n)\neq M_k(y_1,\ldots,y_k)$, then both inequalities are strict,
       \item[(c)] Asymptotic idempotency: for all $k\in\NN$, and $x_1,\dots,x_k,y\in X$,
        \[
          \lim_{n\to\infty}M_{kn+1}(\underbrace{x_1,\ldots,x_1}_{n},\dots,\underbrace{x_k,\ldots,x_k}_{n},y) = M_k(x_1,\dots,x_k).
        \]
       \end{itemize}
 \end{enumerate}
\end{Thm}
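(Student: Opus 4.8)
The plan is to prove the two implications separately. The implication $(i)\Rightarrow(ii)$ is a mild variant of the corresponding implication in Theorem~\ref{Thm_charac_psi_est}: since $\Psi[Z,C](X,\Theta)\subseteq\Psi[T](X,\Theta)$, the symmetry of each $M_n$, the (non-strict) internality of $M$ and the asymptotic idempotency of $M$ all follow exactly as in the proof of Theorem~\ref{Thm_charac_psi_est} (the asymptotic idempotency via Corollary~3.2 in \cite{BarPal4}). So the only genuinely new point is the \emph{strictness} of the internality, which I would check directly. Fix $\bx\in X^n$ and $\by\in X^k$, put $a:=\vartheta_{n,\psi}(\bx)=M_n(\bx)$, $b:=\vartheta_{k,\psi}(\by)=M_k(\by)$, $c:=\vartheta_{n+k,\psi}(\bx,\by)=M_{n+k}(\bx,\by)$, and assume $a<b$. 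Since $\psi$ is a $Z$-function, $\psi_\bx(a)=0=\psi_\by(b)$; as $a$ lies to the left of the sign change $b$ of $\psi_\by$, we get $\psi_\by(a)>0$, hence $\psi_{(\bx,\by)}(a)=\psi_\bx(a)+\psi_\by(a)>0$, which together with $\psi_{(\bx,\by)}(c)=0$ forces $a<c$; symmetrically $\psi_\bx(b)<0$ gives $\psi_{(\bx,\by)}(b)<0$, hence $c<b$. The case $a>b$ is analogous, and $a=b$ is already covered by non-strict internality.

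For $(ii)\Rightarrow(i)$ I would start from the setup of the proof of Theorem~\ref{Thm_charac_psi_est}: form the free Abelian semigroup $(S(X),\oplus)$ on $X$ and the map $\mu\colon S(X)\to\Theta$, $\mu(x_1\oplus\dots\oplus x_n):=M_n(x_1,\dots,x_n)$, well defined by symmetry. Strict internality of $M$ upgrades \eqref{help_m_intern} to a strict inequality whenever $\mu(r)\neq\mu(s)$, while $\mu(r\oplus s)=\mu(r)$ when $\mu(r)=\mu(s)$. Consequently, for each fixed $t\in\Theta$ the three sets $A_t:=\{s:\mu(s)<t\}$, $C_t:=\{s:\mu(s)=t\}$, $B_t:=\{s:\mu(s)>t\}$ are subsemigroups of $S(X)$ partitioning it, with $A_t\oplus C_t\subseteq A_t$ and $B_t\oplus C_t\subseteq B_t$; the hypotheses $\inf M_1(X)=\inf\Theta$, $\sup M_1(X)=\sup\Theta$ and the openness of $\Theta$ give $A_t\neq\emptyset\neq B_t$.

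The new ingredient, needed so that the generator becomes a $Z$-function rather than merely a $T$-function, is to \emph{collapse} $C_t$. I would adjoin a neutral element $e$ to $S(X)$ and let $\widetilde{S}_t$ be the quotient of $S(X)\cup\{e\}$ by the congruence under which $p,q$ are identified iff $p\oplus m=q\oplus m'$ for some $m,m'\in C_t\cup\{e\}$; writing $\pi_t$ for the projection, $\pi_t(e)$ is the neutral element of $\widetilde{S}_t$ and $\pi_t(C_t)=\{\pi_t(e)\}$. Using strict internality one checks that $\widetilde{A}_t:=\pi_t(A_t)$ and $\widetilde{B}_t:=\pi_t(B_t)$ are disjoint subsemigroups of $\widetilde{S}_t$, and the asymptotic-idempotency computation from the proof of Theorem~\ref{Thm_charac_psi_est}, pushed through $\pi_t$, gives $\core(\widetilde{A}_t)=\widetilde{A}_t\neq\emptyset$ and $\core(\widetilde{B}_t)=\widetilde{B}_t\neq\emptyset$. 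Theorem~\ref{Thm_szeparacios} then yields a homomorphism $\widetilde{F}_t\colon\widetilde{S}_t\to\RR$ strictly positive on $\widetilde{A}_t$ and strictly negative on $\widetilde{B}_t$; pulling it back along $\pi_t$ produces a homomorphism $F_t\colon S(X)\to\RR$ with $F_t>0$ on $A_t$, $F_t<0$ on $B_t$, and $F_t\equiv0$ on $C_t$ (because $\pi_t$ sends $C_t$ to the neutral element, which any homomorphism into $(\RR,+)$ maps to $0$). Setting $\psi(x,t):=-F_t(x)$ then makes $\sum_{i=1}^n\psi(x_i,t)=-F_t(x_1\oplus\dots\oplus x_n)$ positive for $t<M_n(\bx)$, negative for $t>M_n(\bx)$ and zero at $t=M_n(\bx)$, so $\psi\in\Psi[Z](X,\Theta)$ and $\vartheta_{n,\psi}=M_n$ for every $n$.

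The remaining step, which I expect to be the main obstacle, is property $[C]$: the homomorphisms $F_t$ are obtained separately for each $t$ and need not depend continuously (or even measurably) on $t$. To remedy this I would carry out the construction coherently along a countable dense set $Q\subseteq\Theta$ and then define $\psi(x,\cdot)$ on all of $\Theta$ by a renormalisation-and-limit procedure (a diagonal argument over $Q$). The delicate point — and precisely where strict internality, together with asymptotic idempotency, is indispensable — is to verify that the limiting $\psi$ still has, for every $n$ and every $\bx\in X^n$, a point of sign change located exactly at $M_n(\bx)$; strictness is what prevents the sign change from being blurred, or the zero from being lost, in passing to the limit. Once continuity is secured, $\psi$ is a continuous $T$-function, hence also a $Z$-function by the remark following Definition~\ref{Def_Tn}, so $\psi\in\Psi[Z,C](X,\Theta)$, which completes the proof.
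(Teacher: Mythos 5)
Your $(i)\Rightarrow(ii)$ direction is fine: the direct verification of strict internality from the $[Z]$ property is correct (the paper instead invokes Theorem 2.1 of \cite{BarPal4}, but your argument is a legitimate self-contained substitute). Your quotient construction collapsing $C_t=\{s:\mu(s)=t\}$ to the neutral element, so that the separating homomorphism vanishes there, is also a plausible way to produce a $\psi$ with property $[Z]$ directly; the paper does not do this, since it obtains $[Z]$ for free once $[C]$ and $[T]$ are both in hand. Up to that point the proposal is a reasonable variant.

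The genuine gap is property $[C]$, which you correctly identify as ``the main obstacle'' but then do not prove. A ``renormalisation-and-limit procedure (a diagonal argument over $Q$)'' is not an argument here: the separation theorem produces the homomorphisms $F_t$ one $t$ at a time with no control whatsoever on their dependence on $t$ (each $F_t$ can be rescaled by an arbitrary positive constant, and nothing ties the choices at nearby parameters together), so there is no reason a limit along a countable dense set should exist, be continuous, or preserve the sign-change locations --- and you yourself flag exactly this as the ``delicate point'' without resolving it. Since continuity is the entire content of Theorem \ref{Thm_charac_cont_psi_est} beyond Theorem \ref{Thm_charac_psi_est}, the proof is incomplete at its central step. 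The paper's resolution is quite different and worth noting: starting from the $T$-function $\psi^*$ furnished by Theorem \ref{Thm_charac_psi_est}, it studies the ratios $f_{\bx,\by}(t)=-\bigl(\sum_{i}\psi^*(x_i,t)\bigr)/\bigl(\sum_{j}\psi^*(y_j,t)\bigr)$ and shows they are monotone in $t$ on the relevant intervals (a rational-approximation argument using only the $T$-property and the values of $M$ on replicated tuples), hence have at worst jump discontinuities, which are then excluded using strict internality; a single pointwise renormalization $\psi(x,t):=\psi^*(x,t)/(|\psi^*(u,t)|+|\psi^*(v,t)|)$ with $M_1(u)\neq M_1(v)$ then expresses $\psi(x,\cdot)$ through these continuous ratios, yielding $[C]$ (and then $[Z]$) with no limiting process at all. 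Some argument of this kind --- extracting continuity from the data $M$ itself rather than from the separating homomorphisms --- is what your proposal is missing.
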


\begin{proof}
First, assume that assertion (i) holds.
Then, since $\psi$ has the property $[T]$, Theorem \ref{Thm_charac_psi_est} implies that $M$ possesses the properties symmetry and asymptotic idempotency.
Since $\psi$ has the property $[Z]$ as well, the strict internality of $M$ is a consequence of Theorem 2.1 in Barczy and P\'ales \cite{BarPal4}.

Next, assume that assertion (ii) holds.
We divide the forthcoming argument into several steps and substeps.

\textit{Step 1.} Since the strict internality of $M$ implies the internality of $M$, in view of Theorem \ref{Thm_charac_psi_est}, we obtain that there exists a function $\psi^*\in\Psi[T](X,\Theta)$ such that, for all $\ell\in\NN$ and $x_1,\ldots,x_\ell\in X$, the equality $M_\ell(x_1,\ldots,x_\ell)=\vartheta_{\ell,\psi^*}(x_1,\ldots,x_\ell)$ holds.
In what follows, we will define a function $\psi\in\Psi(X,\Theta)$ for which assertion (i) holds, i.e., $\psi$ has the properties $[Z]$ and $[C]$, and the equality $M_\ell(x_1,\ldots,x_\ell)=\vartheta_{\ell,\psi}(x_1,\ldots,x_\ell)$ holds
 for all $\ell\in\NN$ and $x_1,\ldots,x_\ell\in X$.

\textit{Step 2.}
For all $k,\ell\in\NN$, $\bx=(x_1,\dots,x_k)\in X^k$ and $\by=(y_1,\ldots,y_\ell)\in X^\ell$, let us define the function $f_{\bx,\by}:\Theta\setminus\{M_\ell(\by)\}\to\RR$ by
\begin{align}\label{help_cont_function1}
  f_{\bx,\by}(t):=-\frac{\psi^*(x_1,t)+\dots+\psi^*(x_k,t)}{\psi^*(y_1,t)+\dots+\psi^*(y_\ell,t)}, \qquad t\in \Theta\setminus\{M_\ell(\by)\}.
\end{align}
Since $\psi^*$ has the property $[T]$, we have that $\psi^*(y_1,t)+\dots+\psi^*(y_\ell,t) \ne 0$ for all $\by\in X^\ell$
 and $t\in\Theta\setminus\{M_\ell(\by)\}$, and hence $f_{\bx,\by}$ is well-defined.

In what follows, we are going to verify that, for all $k,\ell\in\NN$, $\bx=(x_1,\dots,x_k)\in X^k$ and
 $\by=(y_1,\dots,y_\ell)\in X^\ell$ with $M_k(\bx)<M_\ell(\by)$, the function $f_{\bx,\by}$ given by \eqref{help_cont_function1}
 is positive, increasing and continuous on the open interval $(M_k(\bx),M_\ell(\by))$.

Let $k,\ell\in\NN$, $\bx=(x_1,\dots,x_k)\in X^k$ and $\by=(y_1,\dots,y_\ell)\in X^\ell$ be fixed such that $M_k(\bx)<M_\ell(\by)$.
Since $\psi^*$ has the property $[T]$, we have that $f_{\bx,\by}(t)>0$ for $t\in(M_k(\bx),M_\ell(\by))$.

\textit{Step 2/(a).}\
We prove that $f_{\bx,\by}$ is increasing on $(M_k(\bx),M_\ell(\by))$.
Note that in case of $k=\ell=1$, this property has been established in Barczy and P\'ales \cite[part (iv) of Theorem 1]{BarPal2}.
In case of general $k,\ell\in\NN$, we give an independent proof, which uses different ideas.
To the contrary, assume that $f_{\bx,\by}$ is not increasing on $(M_k(\bx),M_\ell(\by))$.
Then there exist $s,t\in (M_k(\bx),M_\ell(\by))$ with $s<t$ such that $f_{\bx,\by}(s)>f_{\bx,\by}(t)>0$.
Then there exist $n,m\in\NN$ such that
\Eq{*}{
  f_{\bx,\by}(s)>\frac{m}{n}>f_{\bx,\by}(t),
 }
 which be written equivalently in the form
\Eq{*}{
  n\sum_{i=1}^k \psi^*(x_i,s) + m\sum_{j=1}^\ell \psi^*(y_j,s)
  <0<n\sum_{i=1}^k \psi^*(x_i,t) + m \sum_{j=1}^\ell \psi^*(y_j,t).
}
Using that $\psi^*$ has the property $[T]$, these inequalities yield that
\Eq{*}{
  t\leq M_{nk+m\ell}(\underbrace{x_1,\dots,x_1}_n,\dots,\underbrace{x_k,\dots,x_k}_n, \underbrace{y_1,\dots,y_1}_m,\dots,\underbrace{y_\ell,\dots,y_\ell}_m)\leq s,
}
which contradicts the inequality $s<t$.
The contradiction obtained proves that $f_{\bx,\by}$ is increasing on $(M_k(\bx),M_\ell(\by))$.

\textit{Step 2/(b).}\
Now we prove that $f_{\bx,\by}$ is continuous on $(M_k(\bx),M_\ell(\by))$.
To the contrary, assume that $f_{\bx,\by}$ is not continuous at some point of $(M_k(\bx),M_\ell(\by))$.
Then, since $f_{\bx,\by}$ is increasing on $(M_k(\bx),M_\ell(\by))$, there exists a point
 $t_0\in(M_k(\bx),M_\ell(\by))$ such that $f_{\bx,\by}$ has a jump at $t_0$ meaning that the left limit $L$ of $f_{\bx,\by}$
  at $t_0$ is strictly less than the right limit $R$ of $f_{\bx,\by}$ at $t_0$.
Consequently, one can choose $n,m_1,m_2\in\NN$ such that $m_1<m_2$ and $L<\frac{m_1}{n}<\frac{m_2}{n}<R$.
Then, using again that $f_{\bx,\by}$ is increasing on $(M_k(\bx),M_\ell(\by))$, for all $M_k(\bx)<t<t_0<t'<M_\ell(\by)$, we have that
 \[
   f_{\bx,\by}(t) \leq L < \frac{m_1}{n} < \frac{m_2}{n} < R \leq f_{\bx,\by}(t').
 \]
Using that $\sum_{j=1}^\ell \psi^*(y_j,t)>0$ and $\sum_{j=1}^\ell \psi^*(y_j,t')>0$, by rearranging these inequalities, we obtain that, for $\alpha\in\{1,2\}$,
 \begin{align*}
    &n\sum_{i=1}^k \psi^*(x_i,t) + m_\alpha\sum_{j=1}^\ell \psi^*(y_j,t) >0 \quad \text{for all $t\in(M_k(\bx),t_0)$,}\\
    &n\sum_{i=1}^k \psi^*(x_i,t') + m_\alpha\sum_{j=1}^\ell \psi^*(y_j,t') <0 \quad \text{for all $t'\in(t_0,M_\ell(\by))$.}
 \end{align*}
Using that $\psi^*$ has the property $[T]$, these inequalities imply that
 \begin{align*}
   &\vartheta_{nk+\ell m_\alpha,\psi^*}\big(\underbrace{x_1,\dots,x_1}_n,\dots,\underbrace{x_k,\dots,x_k}_n,\underbrace{y_1,\dots,y_1}_{m_\alpha},\dots,\underbrace{y_\ell,\dots,y_\ell}_{m_\alpha} \big) = t_0, \qquad \alpha\in\{1,2\}.
 \end{align*}
Since $\vartheta_{r,\psi^*}(x_1,\ldots,x_r) = M_r(x_1,\ldots,x_r)$ for all $r\in\NN$ and $x_1,\ldots,x_r\in X$, we have
 \begin{align*}
   M_{nk+\ell m_\alpha}\big(\underbrace{x_1,\dots,x_1}_n,\dots,\underbrace{x_k,\dots,x_k}_n, \underbrace{y_1,\dots,y_1}_{m_\alpha},\dots,\underbrace{y_\ell,\dots,y_\ell}_{m_\alpha} \big)   
      = t_0 \in (M_k(\bx),M_\ell(\by))
 \end{align*}
 for $\alpha\in\{1,2\}$, yielding that
 \begin{align}\label{help_folytonossag}
  \begin{split}
  &M_{nk+\ell m_1}\big(\underbrace{x_1,\dots,x_1}_n,\dots,\underbrace{x_k,\dots,x_k}_n, \underbrace{y_1,\dots,y_1}_{m_1},\dots,\underbrace{y_\ell,\dots,y_\ell}_{m_1} \big)  \\
  &\qquad = M_{nk+\ell m_2}\big(\underbrace{x_1,\dots,x_1}_n,\dots,\underbrace{x_k,\dots,x_k}_n, \underbrace{y_1,\dots,y_1}_{m_2},\dots,\underbrace{y_\ell,\dots,y_\ell}_{m_2} \big)
         \in (M_k(\bx),M_\ell(\by)).
 \end{split}
 \end{align}

In what follows, we prove that \eqref{help_folytonossag} cannot be valid.
Recall, that in view of the symmetry and internality of $M$, we have
 \begin{align*}
  M_\ell(\by)
    = M_{\ell(m_2-m_1)}\big(\underbrace{\by,\ldots,\by}_{m_2-m_1}\big)
    = M_{\ell(m_2-m_1)}\big( \underbrace{y_1,\dots,y_1}_{m_2-m_1},\dots,\underbrace{y_\ell,\dots,y_\ell}_{m_2-m_1} \big).
 \end{align*}
Consequently, using the inclusion in \eqref{help_folytonossag} and the symmetry and strict internality of $M$, we get that
 \begin{align*}
    &M_{nk+\ell m_1}\big(\underbrace{x_1,\dots,x_1}_n,\dots,\underbrace{x_k,\dots,x_k}_n, \underbrace{y_1,\dots,y_1}_{m_1},\dots,\underbrace{y_\ell,\dots,y_\ell}_{m_1}  \big)\\
    &\quad  = \min\Big( M_{nk+\ell m_1}\big(\underbrace{x_1,\dots,x_1}_n,\dots,\underbrace{x_k,\dots,x_k}_n, \underbrace{y_1,\dots,y_1}_{m_1},\dots,\underbrace{y_\ell,\dots,y_\ell}_{m_1} \big), M_\ell(\by) \Big)\\
    &\quad = \min\Big( M_{nk+\ell m_1}\big(\underbrace{x_1,\dots,x_1}_n,\dots,\underbrace{x_k,\dots,x_k}_n, \underbrace{y_1,\dots,y_1}_{m_1},\dots,\underbrace{y_\ell,\dots,y_\ell}_{m_1}  \big), \\
    &\phantom{\quad =  \min\Big(\;}
            M_{\ell(m_2-m_1)}\big( \underbrace{y_1,\dots,y_1}_{m_2-m_1},\dots,\underbrace{y_\ell,\dots,y_\ell}_{m_2-m_1} \big)\Big)\\
    &\quad < M_{nk+\ell m_2}\big(\underbrace{x_1,\dots,x_1}_n,\dots,\underbrace{x_k,\dots,x_k}_n,\underbrace{y_1,\dots,y_1}_{m_2},\dots,\underbrace{y_\ell,\dots,y_\ell}_{m_2}  \big),
 \end{align*}
which contradicts the equality in \eqref{help_folytonossag}.
The contradiction obtained proves the continuity of $f_{\bx,\by}$ on $(M_k(\bx),M_\ell(\by))$.

\textit{Step 3.}
We show that, for all $k,\ell\in\NN$, $\bx=(x_1,\dots,x_k)\in X^k$ and $\by=(y_1,\dots,y_\ell)\in X^\ell$ with $M_\ell(\by)<M_k(\bx)$,
 the function $f_{\bx,\by}$ given by \eqref{help_cont_function1} is decreasing and continuous on the open interval $(M_\ell(\by), M_k(\bx))$.
In what follows, let $k,\ell\in\NN$, $\bx=(x_1,\dots,x_k)\in X^k$ and $\by=(y_1,\dots,y_\ell)\in X^\ell$ be fixed such that $M_\ell(\by)<M_k(\bx)$.
By Step 2, the function $f_{\by,\bx}$ is positive, increasing
 and continuous on $(M_\ell(\by), M_k(\bx))$.
Further, on the interval $(M_\ell(\by), M_k(\bx))$, we have that $f_{\bx,\by}=1/f_{\by,\bx}$,
 which shows that $f_{\bx,\by}$ is decreasing and continuous on $(M_\ell(\by), M_k(\bx))$.

\textit{Step 4.}
We show that, for all $k,\ell\in\NN$, $\bx=(x_1,\dots,x_k)\in X^k$ and $\by=(y_1,\dots,y_\ell)\in X^\ell$ with $M_k(\bx)<M_\ell(\by)$, the function $f_{\bx,\by}$ given by \eqref{help_cont_function1} is continuous on its entire domain $\Theta\setminus\{M_\ell(\by)\}$.
In Step 2, we have already proved that $f_{\bx,\by}$ is continuous on $(M_k(\bx),M_\ell(\by))$.
In what follows, let $k,\ell\in\NN$, $\bx=(x_1,\dots,x_k)\in X^k$ and $\by=(y_1,\dots,y_\ell)\in X^\ell$ be fixed such that $M_k(\bx)<M_\ell(\by)$.

\textit{Step 4/(a).}
We show that $f_{\bx,\by}$ is continuous at $M_k(\bx)$.
To show this, choose an element $z\in X$ such that $M_1(z)<M_k(\bx)$.
Since $\inf M_1(X) = \inf \Theta$, $M_k(\bx)\in \Theta$, and $\Theta$ is open, such an element $z\in X$ can be indeed chosen.
Then, by the strict internality of $M$, we have that $M_1(z)<M_{k+1}(z,\bx)<M_k(\bx)$.
Hence, by Step 2, we get that $f_{z,\by}$ is continuous on $(M_1(z),M_\ell(\by))$, and $f_{(z,\bx),\by}$ is continuous on $(M_{k+1}(z,\bx),M_\ell(\by))$.
In particular, since $M_k(\bx)\in (M_1(z),M_\ell(\by))$ and $M_k(\bx)\in (M_{k+1}(z,\bx),M_\ell(\by))$,
we have that $f_{z,\by}$ and $f_{(z,\bx),\by}$ are continuous at $M_k(\bx)$.
Using the decomposition
 \begin{align*}
  f_{\bx,\by}(t) & = -\frac{\psi^*(x_1,t)+ \cdots + \psi^*(x_k,t)}{\psi^*(y_1,t)+\cdots + \psi^*(y_\ell,t)}
                 = -\frac{(\psi^*(z,t)+ \psi^*(x_1,t) + \cdots + \psi^*(x_k,t)) - \psi^*(z,t)}{\psi^*(y_1,t)+\cdots + \psi^*(y_\ell,t)} \\
               & = f_{(z,\bx),\by}(t) - f_{z,\by}(t), \qquad t\in\Theta\setminus\{ M_\ell(\by)\},
 \end{align*}
 and the facts that $f_{(z,\bx),\by}$ and $f_{z,\by}$ are continuous at $M_k(\bx)$,
 we obtain that $f_{\bx,\by}$ is also continuous at $M_k(\bx)$.

\textit{Step 4/(b).}
We show that $f_{\bx,\by}$ is continuous at $t$, where $t<M_k(\bx)$ and $t\in\Theta$. Let $t\in\Theta$ be fixed such that $t<M_k(\bx)$.
Since $\Theta$ is open, $t\in\Theta$ and $\inf M_1(X) = \inf\Theta$, we can choose an element $z\in X$ such that $M_1(z)<t$.
Due to the fact that $M_1(z) < t < M_k(\bx) < M_\ell(\by)$, by Step 2, we get that $f_{z,\by}$ is continuous on $(M_1(z),M_\ell(\by))$
 and $f_{z,\bx}$ is continuous on $(M_1(z),M_k(\bx))$.
In particular, $f_{z,\by}$ and $f_{z,\bx}$ are continuous at $t$.
Since $\psi^*(z,s)<0$, $\sum_{i=1}^k\psi^*(x_i,s)>0$ and $\sum_{j=1}^\ell\psi^*(y_j,s)>0$ for $s\in(M_1(z),M_k(\bx))$, we can consider the decomposition
 \begin{align*}
 f_{\bx,\by}(s)
          & = -\frac{\psi^*(x_1,s)+\cdots +\psi^*(x_k,s)}{\psi^*(y_1,s)+\cdots + \psi^*(y_\ell,s)}\\
            &= -\frac{\psi^*(x_1,s)+\cdots +\psi^*(x_k,s)}{\psi^*(z,s)}\cdot \frac{\psi^*(z,s)}{\psi^*(y_1,s)+\cdots + \psi^*(y_\ell,s)} \\
          & = - \frac{f_{z,\by}(s)}{f_{z,\bx}(s)},
           \qquad s\in(M_1(z),M_k(\bx)).
 \end{align*}
Using that $t\in(M_1(z),M_k(\bx))$,  this implies that $f_{\bx,\by}$ is continuous at $t$.

\textit{Step 4/(c).}
Finally, we show that $f_{\bx,\by}$ is continuous at $t$, where $t>M_\ell(\by)$ and $t\in\Theta$. Let $t\in\Theta$ be fixed such that $t>M_\ell(\by)$.
Since $\Theta$ is open, $t\in\Theta$ and $\sup M_1(X) = \sup\Theta$, we can choose an element $z\in X$ such that $M_1(z)>t$.
Due to the fact that $M_k(\bx) < M_\ell(\by) < t < M_1(z)$, by Step 2, we get that $f_{\bx,z}$ is continuous on $(M_k(\bx),M_1(z))$
 and $f_{\by,z}$ is continuous on $(M_\ell(\by),M_1(z))$.
In particular, $f_{\bx,z}$ and $f_{\by,z}$ are continuous at $t$.
Since $\psi^*(z,s)>0$ and $\sum_{j=1}^\ell \psi^*(y_j,s)<0$ for $s\in(M_\ell(\by),M_1(z))$, we can consider the decomposition
 \begin{align*}
 f_{\bx,\by}(s)&= -\frac{\psi^*(x_1,s)+\cdots +\psi^*(x_k,s)}{\psi^*(y_1,s)+\cdots + \psi^*(y_\ell,s)}\\
             &= -\frac{\psi^*(x_1,s)+\cdots +\psi^*(x_k,s)}{\psi^*(z,s)}\cdot \frac{\psi^*(z,s)}{\psi^*(y_1,s)+\cdots + \psi^*(y_\ell,s)} \\
             &= - \frac{f_{\bx,z}(s)}{f_{\by,z}(s)},
              \qquad s\in(M_\ell(\by),M_1(z)).
 \end{align*}
Using that $t\in(M_\ell(\by),M_1(z))$,  this implies that $f_{\bx,\by}$ is continuous at $t$.

\textit{Step 5.}\
We show that, for all $k,\ell\in\NN$, $\bx=(x_1,\dots,x_k)\in X^k$ and $\by=(y_1,\dots,y_\ell)\in X^\ell$ with $M_\ell(\by)<M_k(\bx)$, the function $f_{\bx,\by}$ given by \eqref{help_cont_function1} is continuous on its entire domain $\Theta\setminus\{M_\ell(\by)\}$).
In what follows, let $k,\ell\in\NN$, $\bx=(x_1,\dots,x_k)\in X^k$ and $\by=(y_1,\dots,y_\ell)\in X^\ell$ be fixed such that $M_\ell(\by)<M_k(\bx)$.

\textit{Step 5/(a).}\
We show that $f_{\bx,\by}$ is continuous at $M_k(\bx)$.
To show this, choose an element $z\in X$ such that $M_k(\bx)<M_1(z)$.
Since $\sup M_1(X) = \sup \Theta$, $M_k(\bx)\in \Theta$, and $\Theta$ is open, such an element $z\in X$ can be indeed chosen.
Then, by the strict internality of $M$, we have that $M_\ell(\by)<M_k(\bx)<M_{k+1}(\bx,z)<M_1(z)$.
Hence, by Step 3, we get that $f_{z,\by}$ is continuous on $(M_\ell(\by),M_1(z))$,
 and $f_{(\bx,z),\by}$ is continuous on $(M_\ell(\by),M_{k+1}(z,\bx))$.
In particular, $f_{z,\by}$ and $f_{(\bx,z),\by}$ are continuous at $M_k(\bx)$.
Using the decomposition
 \begin{align*}
  f_{\bx,\by}(t) & = -\frac{\psi^*(x_1,t)+ \cdots + \psi^*(x_k,t)}{\psi^*(y_1,t)+\cdots + \psi^*(y_\ell,t)}
                 = -\frac{(\psi^*(z,t)+ \psi^*(x_1,t) + \cdots + \psi^*(x_k,t)) - \psi^*(z,t)}{\psi^*(y_1,t)+\cdots + \psi^*(y_\ell,t)} \\
               & = f_{(z,\bx),\by}(t) - f_{z,\by}(t), \qquad t\in\Theta\setminus\{ M_\ell(\by)\},
 \end{align*}
 and the facts that $M_k(\bx) \ne M_\ell(\by)$, $f_{(\bx,z),\by} = f_{(z,\bx),\by}$  and that $f_{(\bx,z),\by}$ and $f_{z,\by}$ are continuous at $M_k(\bx)$,
 we obtain that $f_{\bx,\by}$ is also continuous at $M_k(\bx)$.

\textit{Step 5/(b).}\
We show that $f_{\bx,\by}$ is continuous on $\Theta\setminus\{ M_k(\bx),M_\ell(\by)\}$.
By Step 4, the function $f_{\by,\bx}$ is continuous on $\Theta\setminus\{ M_k(\bx)\}$.
Since
 \[
   f_{\bx,\by}(t) = \frac{1}{ f_{\by,\bx}(t)}, \qquad t\in \Theta\setminus\{ M_k(\bx),M_\ell(\bx)\},
 \]
 we can conclude that $f_{\bx,\by}$ is continuous on $\Theta\setminus\{ M_k(\bx),M_\ell(\bx)\}$.

\textit{Step 6.}\
Let us choose $u,v\in X$ such that $M_1(u)\neq M_1(v)$.
Since, by the assumptions, $M_1$ is not a constant function, such elements $u$ and $v$ can be choosen.

Let $\psi:X\times \Theta\to\RR$ be defined by
 \[
   \psi(x,t):=\frac{\psi^*(x,t)}{|\psi^*(u,t)|+|\psi^*(v,t)|}, \qquad x\in X, \;\; t\in\Theta.
 \]
Then $\psi$ is well-defined, since $\psi^*$ has the property $[T]$ and hence if $t\in\Theta$ is such that $t\neq M_1(u)$, then $|\psi^*(u,t)|>0$ and if $t=M_1(u)$, then $|\psi^*(v,t)|>0$ (due to the facts that $M_1(u) \ne M_1(v)$ and $|\psi^*(v,s)|>0$ for all $s\in\Theta\setminus\{M_1(v)\}$).

\textit{Step 7.}\
We show that $\psi\in\Psi[Z,C](X,\Theta)$, and that the equality
 $\vartheta_{n,\psi}(x_1,\ldots,x_n) = M_n(x_1,\ldots,x_n)$ holds for all $n\in\NN$ and $x_1,\ldots,x_n\in X$.

\textit{Step 7/(a).}\ For all $n\in\NN$ and $x_1,\ldots,x_n\in X$, we have
 \[
  \sum_{i=1}^n \psi(x_i,t) = \frac{1}{|\psi^*(u,t)|+|\psi^*(v,t)|} \sum_{i=1}^n \psi^*(x_i,t), \qquad t\in\Theta.
 \]
Using that $|\psi^*(u,t)|+|\psi^*(v,t)|>0$, $t\in\Theta$, and $\psi^*$ has the property $[T]$ (see Step 1), it follows that $\psi$ has the property $[T]$ as well, and $\vartheta_{n,\psi}(x_1,\ldots,x_n) = \vartheta_{n,\psi^*}(x_1,\ldots,x_n)$, yielding that
 $\vartheta_{n,\psi}(x_1,\ldots,x_n) = M_n(x_1,\ldots,x_n)$ for all $n\in\NN$ and $x_1,\ldots,x_n\in X$, as desired.

\textit{Step 7/(b).}\
We show that $\psi$ has the properties $[Z]$ and $[C]$.
Since $\psi$ has the property $[T]$ (see Step 7/(a)), it is enough to check that $\psi$ has the property $[C]$.

If $t\in\Theta$ is such that $t\neq M_1(u)$, then $\psi^*(u,t)\neq0$ and we can write
\Eq{*}{
  \psi(x,t):=\frac{\dfrac{\psi^*(x,t)}{|\psi^*(u,t)|}}{1+\dfrac{|\psi^*(v,t)|}{|\psi^*(u,t)|}}
  =\begin{cases}
    - \dfrac{f_{x,u}(t)}{1+|f_{v,u}(t)|} &\mbox{if $t<M_1(u)$},\\[4mm]
   \dfrac{f_{x,u}(t)}{1+|f_{v,u}(t)|} &\mbox{if $t>M_1(u)$},
   \end{cases}
   \qquad x\in X. 
  }
By Steps 4 and 5, the functions $f_{x,u}$ and $f_{v,u}$ are continuous on $\Theta\setminus\{M_1(u)\}$.
This shows that, for all $x\in X$, the map $t\mapsto\psi(x,t)$ is continuous on the set $\Theta\setminus\{M_1(u)\}$.

Similarly, if $t\in\Theta\setminus\{M_1(v)\}$, then $\psi^*(v,t)\neq0$ and we can write
\Eq{*}{
  \psi(x,t):=\frac{\dfrac{\psi^*(x,t)}{|\psi^*(v,t)|}}{\dfrac{|\psi^*(u,t)|}{|\psi^*(v,t)|}+1}
  =\begin{cases}
   -\dfrac{f_{x,v}(t)}{|f_{u,v}(t)|+1} &\mbox{if $t<M_1(v)$},\\[4mm]
   \dfrac{f_{x,v}(t)}{|f_{u,v}(t)|+1} &\mbox{if $t>M_1(v)$},
   \end{cases}
   \qquad x\in X.
}
By Steps 4 and 5, the functions $f_{x,v}$ and $f_{u,v}$ are continuous on $\Theta\setminus\{M_1(v)\}$.
This shows that, for all $x\in X$, the map $t\mapsto\psi(x,t)$ is continuous on the set $\Theta\setminus\{M_1(v)\}$.

The above two continuity properties and the fact that $M_1(u)\neq M_1(v)$ imply that the map $t\mapsto\psi(x,t)$ is continuous on $(\Theta\setminus\{M_1(u)\})\cup (\Theta\setminus\{M_1(v)\}) =\Theta$,
and hence $\psi$ possesses the property $[C]$.
\end{proof}

Concerning the proof of Theorem \ref{Thm_charac_cont_psi_est}, we mention that
it is not a simple application or modification of that of Theorem \ref{Thm_charac_psi_est}, most of it (Steps 2-7 when we verify that part (ii) implies part (i)) is devoted to find an appropriate version of the function $\psi^*$ appearing in Step 1 such that it has the properties $[Z]$ and $[C]$.
In the forthcoming Remark \ref{Rem_Thm_31_kieg}, we also point out that, one can show in a somewhat simpler manner that the function $\psi^*$ in question has the property $[Z]$.
However, we call the attention that, in general, we cannot prove that $\psi^*$ has the property $[C]$ not even when we know that it has the property $[Z]$.
This explains the reason for introducing a modified version of $\psi^*$ in Step 6 in the proof of Theorem \ref{Thm_charac_cont_psi_est}.
We emphasize that the proof given in Remark \ref{Rem_Thm_31_kieg} is not needed for the proof of Theorem \ref{Thm_charac_cont_psi_est},
 it just deepens the understanding in the sense that it highlights that the property $[Z]$ can be proved for the function $\psi^*$ appearing in Step 1 (in the proof of Theorem \ref{Thm_charac_cont_psi_est}).

\begin{Rem}\label{Rem_Thm_31_kieg}
We give a proof of the fact that the function $\psi^*$  appearing in Step 1 in the proof of Theorem \ref{Thm_charac_cont_psi_est}
 possesses the property $[Z]$. However, we emphasize again that, in general, we cannot prove that $\psi^*$ has the property $[C]$ not even when we know that it has the property $[Z]$.
Let $k\in\NN$ and $\bx=(x_1,\dots,x_k)\in X^k$ be fixed.
Taking into account that $\psi^*$ has the property $[T]$ and $\vartheta_{k,\psi^*}(\bx) = M_k(\bx)$,
 we need to verify that $\sum_{i=1}^k\psi^*(x_i,M_k(\bx))=0$.
Since $\sup M_1(X) = \sup \Theta$, $M_k(\bx)\in \Theta$ and $\Theta$ is open,
 one can choose an element $y\in X$ such that $M_k(\bx)<M_1(y)$.
Consider the function $f_{\bx,y}:\Theta\setminus\{M_1(y)\}\to\RR$ given in \eqref{help_cont_function1}:
 \[
    f_{\bx,y}(t)=-\frac{\psi^*(x_1,t)+\dots + \psi^*(x_k,t)}{\psi^*(y,t)},\qquad t\in \Theta\setminus\{M_1(y)\}.
 \]
Using that $\sum_{i=1}^k\psi^*(x_i,t)>0$ and $\psi^*(y,t)>0$ for $t\in(\inf \Theta, M_k(\bx))$, we have that
 \begin{align}\label{help_Z_1}
  f_{\bx,y}(t)<0, \qquad t\in(\inf \Theta, M_k(\bx)).
 \end{align}
Similarly, since $\sum_{i=1}^k\psi^*(x_i,t)<0$ and $\psi^*(y,t)>0$ for $t\in (M_k(\bx),M_1(y))$, we have that
 \begin{align}\label{help_Z_2}
 f_{\bx,y}(t)>0, \qquad t\in (M_k(\bx),M_1(y)).
 \end{align}
By Step 4 in the proof of Theorem \ref{Thm_charac_cont_psi_est},
 $f_{\bx,y}$ is continuous (on $\Theta\setminus\{M_1(y)\}$), and hence the limits
 \[
   \lim_{t\uparrow M_k(\bx)} f_{\bx,y}(t) \qquad \text{and} \qquad \lim_{t\downarrow M_k(\bx)} f_{\bx,y}(t)
 \]
 exist and coincide with $f_{\bx,y}(M_k(\bx))$.
By \eqref{help_Z_1} and \eqref{help_Z_2}, we have
 \[
   f_{\bx,y}(M_k(\bx)) = \lim_{t\uparrow M_k(\bx)} f_{\bx,y}(t)\leq 0
     \qquad \text{and}\qquad
   f_{\bx,y}(M_k(\bx))= \lim_{t\downarrow M_k(\bx)} f_{\bx,y}(t)\geq 0.
 \]
Consequently, $f_{\bx,y}(M_k(\bx))=0$, which yields that $\psi^*(\bx,M_k(\bx))=0$, as desired.
\proofend
\end{Rem}

\section*{Acknowledgments}
We acknowledge the valuable suggestions from the referees.

\section*{Conflict of interest}
On behalf of all authors, the corresponding author states that there is no conflict of interest.

\bibliographystyle{plain}

\end{document}